\documentclass[10pt, twoside, fleqn, a4paper]{article}

\usepackage{latexsym}  
\usepackage{amscd}    
\usepackage{theorem}  
\usepackage{mathbbol} 
\usepackage{amsfonts}  
\usepackage{xspace}  
\usepackage{amssymb} 
\usepackage{fancyhdr} 
\usepackage{mathrsfs} 
\usepackage{amsmath} 
\usepackage{graphics} 
\usepackage{graphicx} 
\usepackage{euscript}
\usepackage{psfrag}
\usepackage{empheq} 
\usepackage{mathabx} 
\usepackage[parfill]{parskip}     
\usepackage[colorlinks=true, allcolors=blue]{hyperref}
\usepackage{cancel} 
\usepackage{bigints}

\title{\large A VIEW TOWARDS MIXING IN HOLOMORPHIC CORRESPONDENCES}
\author{Sathi Trikkadeeri Mana\footnote{Indian Institute of Science Education and Research Thiruvananthapuram (IISER-TVM). \\ ORCID: 0009-0000-1444-4604 \ \ email: \texttt{sathitm23@iisertvm.ac.in}} \  and \  Bharath Krishna Seshadri\footnote{Indian Institute of Science Education and Research Thiruvananthapuram (IISER-TVM). \\ ORCID: 0009 - 0004 - 0541 - 8040\ \ email: \texttt{bharathmaths21@iisertvm.ac.in} \ (Corresponding Author)}}

\DeclareFontFamily{OT1}{pzc}{}
\DeclareFontShape{OT1}{pzc}{m}{it}%
              {<-> s * [0.900] pzcmi7t}{}
\DeclareMathAlphabet{\mathpzc}{OT1}{pzc}%
                                 {m}{it}

{\theorembodyfont{\slshape} \newtheorem{theorem}{Theorem}[section]}
{\theorembodyfont{\slshape} \newtheorem{definition}[theorem]{Definition}}
{\theorembodyfont{\slshape} \newtheorem{lemma}[theorem]{Lemma}}
{\theorembodyfont{\slshape} \newtheorem{proposition}[theorem]{Proposition}}
{\theorembodyfont{\slshape} }
{\theorembodyfont{\slshape} } 
{\theorembodyfont{\slshape} } 
{\theorembodyfont{\slshape} \newtheorem{remark}[theorem]{Remark}}
{\theorembodyfont{\slshape} \newtheorem{example}[theorem]{Example}}

\numberwithin{equation}{section}

\newenvironment{proof}{\paragraph{Proof:}}{\hfill$\blacklozenge$}

\usepackage[backend=biber,style=alphabetic]{biblatex}
\begin{document}

\maketitle

\begin{abstract} 
\noindent 
In this manuscript we develop a theory of mixing and weakly mixing in the study of dynamics of holomorphic correspondences defined on a compact connected complex manifold. We also connect these notions to the theory of ergodicity of holomorphic correspondences developed by Londhe. Further, we give motivation and illustrative examples that compare the present scenario with that of maps. Finally, we study product of two holomorphic correspondences and use them to characterise weakly mixing. 
\end{abstract}

\begin{tabular}{l l} 
{\bf Keywords} & Holomorphic correspondences \\ 
& Mixing and weakly mixing \\ 
& Ergodicity \\ 
& Koopman operator \\
& Product correspondences \\
\\
{\bf MSC Subject} & \\ 
{\bf Classifications} & Primary : 37A05, 37A25 \\
& Secondary : 32A08, 32H50, 37A50 \\ 
\end{tabular} 

\bigskip 

\section{Introduction} 

The study of complex dynamics mainly involves understanding the dynamics of both single - valued and multi - valued functions defined on spaces with complex analytic structure. The typical single - valued functions studied are the rational maps on the Riemann sphere. Some ideas generalising the theory of iterations of a single rational map include the study of Kleinian groups and that of rational semigroups. However, in any case, each of the iterate in these cases remain single - valued functions. With regards to the multi - valued functions in case of complex dynamics, the prime examples are what are called holomorphic correspondences. This study mainly gained prominence starting with the works of Bullett and Penrose, like for example \cite{bullet_quad, bullpen_gallery, bulllpen_limit}. Various directions of study have been carried out in this domain since then. One of the modes of investigation is to study the measure theoretic and ergodic theoretic properties related to the dynamics of holomorphic correspondences and compare them with analogous results in the case of rational maps.

In the case of rational maps on $\widehat{\mathbb{C}}$, the works of Lyubich and that of the trio of Freire, Lopes and Ma\~{n}\'{e} proved the equidistribution result related to the backward images of a non exceptional point in $\widehat{\mathbb{C}}$ under the iterations of a given rational map. In \cite{lyubich_equi}, the author considered sequences of measures related to backward orbits and periodic points respectively and constructed the corresponding limits obtained via weak* convergence and explored the properties of the limits. The limit measure found in above results is found to have some nice ergodic theoretic properties, as the results in \cite{lyubich_equi} and \cite{flm_equi} indicate. 

The above ideas were first studied in the case of holomorphic correspondences by Dinh and Sibony. In fact, in the work \cite{dinhsibony_og}, they proved the analogue of equidistribution of backward orbits for what are called meromorphic correspondences defined on K\"{a}hler manifolds, which actually subsumes the holomorphic correspondences and in \cite{dinh_poly}, Dinh had investigated about polynomial correspondences. Similar to the case of maps, the limit measure obtained in this case also turns out to have some nice properties. For example, the recent work \cite{dinh_kauffmann} and the paper \cite{dinh_poly} mentions about the concept of exponential mixing and proves that the limit measure in the appropriate equidistribution theorems related to the holomorphic correspondences considered in those works, have that property. It must be noted that the notions of invariance of measure under the correspondence, the idea of iteration, etc come in a different avatar - understandably so due to the multi - valued nature of a correspondence. 

These limiting measures serve as prototypes for studying various ergodic theoretic properties and further motivate us to generalise the notions in the ergodic theory of maps to the setting of holomorphic correspondences. Given a holomorphic correspondence $F$ on a compact connected complex manifold $X$ (see Section \ref{sec:two} for its definition), the main objects of interest from a measure theoretic perspective is what are called $F^{*}$ invariant measures, which we shall introduce in Section \ref{sec:two}. These are the analogues of invariant measures found in the case of maps. In the works \cite{londhe_recurrence} and \cite{londhe_ergodicity}, Londhe took this theory forward by introducing the notions of recurrence and ergodicity of a holomorphic correspondence $F$ with respect to an $F^{*}$ invariant measure and proved the analogues of the Poincare recurrence theorem and the Birkhoff ergodic theorem in this setting. 

Let $F$ be a holomorphic correspondence on $X$ and let $\mu$ be an $F^{*}$ invariant measure. Motivated by the above mentioned results, in this work, we investigate the mixing notions in the study of holomorphic correspondences. We now give a brief outline on the contents of our paper. In Section \ref{sec:two}, we give an overview of the existing results on the dynamics of holomorphic correspondences, that we shall use in our paper. We further introduce the Koopman operator associated to a holomorphic correspondence. Section \ref{sec:three} serves as a launchpad for the main part of this paper by stating the definitions of $F$ being mixing (resp. weakly mixing) with respect to $\mu$ (see Definition \ref{defn:mixing}), that we shall work with in this article. We also give a thorough motivation for our definition and a heuristic justification on our choice of definition and how there is a marked difference in the case of mixing phenomenon in the study of rational maps (or for that matter any continuous map on a compact space) and that in the study of holomorphic correspondences. Section \ref{sec:four} contains various results relating the mixing phenomenon in holomorphic correspondences with that of ergodicity and also characterises ergodicity by showing that it can be viewed as ``mixing on the average" (refer Theorem \ref{thm:ergequiv}). We also state a characterising result for mixing and weakly mixing. Further, this section provides an illustrative example, namely Example \ref{example} (see the discussion immediately succeeding it as well) and various observations (see in particular Propositions \ref{thm:ineqeg} and \ref{prop:aiislim}) comparing how interaction between mixing and ergodicity differs between the case of maps and that of correspondences. In Section \ref{sec:five}, we discuss the dynamics of product of two holomorphic correspondences $F_1$ and $F_2$ defined on compact connected complex manifolds $X_1$ and $X_2$ respectively. In Section \ref{sec:six}, we provide an application for the tools developed in Section \ref{sec:five} to prove a characterising result regarding the ergodicity of product correspondences, which turns out to be related to the notion of weakly mixing. We conclude the paper with Theorem \ref{thm:main}, the main result of Section \ref{sec:six}, that captures this idea. 

\section{An invitation to holomorphic correspondences}
\label{sec:two}

We give a brief overview on the basics of holomorphic correspondences that we use in our work. An interested reader can also check the papers \cite{dinhsibony_og}, \cite{bs_p1}, \cite{bs_entropy} and \cite{londhe_recurrence} for further details regarding the same. The presentation given in this section mostly follows the one given in \cite{londhe_ergodicity}. In this paper, unless stated otherwise, $X$ shall denote a compact connected complex manifold of dimension $k$. Further, we consider our manifolds to be Hausdorff. Let $\pi_{1}$ and $\pi_{2}$ be the projections defined from $X \times X$ onto $X$, given by  $\pi_{1} (z, w) = z \ \  \text{and} \ \  \pi_{2} (z,w) = w.$
\begin{definition} 
\label{defn:holcorresp}
A holomorphic correspondence on $X$, denoted by $\Gamma$ is a formal sum given by 
\begin{equation} 
\label{eqn:holocorr} 
\Gamma\ \ =\ \ \sum_{j\, =\, 1}^{N} m_{j} \Gamma_{j}, 
\end{equation} 
where $m_{j} \in \mathbb{Z}_{+}$ for every $1 \le j \le N$ (denotes the multiplicity of the sub-variety $\Gamma_{j}$) and $\Gamma_j \, \text{'s}$ are distinct irreducible complex subvarieties of $X \times X$ of dimension $k$, that satisfies the following conditions: 
\begin{enumerate} 
\item For each $1 \le j \le N$, we have $\left. \pi_{1} \right|_{\Gamma_{j}}$ and $\left. \pi_{2} \right|_{\Gamma_{j}}$ are surjective; 
\item The set $\bigcup\limits_{j\, =\, 1}^{N} \pi_{2} \left( \pi_{1}^{-1} \{z_{0}\} \cap \Gamma_{j} \right)$ and $\bigcup\limits_{j\,=\, 1}^{N} \pi_{1} \left( \pi_{2}^{-1} \{z_{0}\} \cap \Gamma_{j} \right)$, where the points are repeated according to their multiplicities, is finite for all $z_{0} \in X$.
\end{enumerate} 
\end{definition}

Throughout this paper, there are instances where we use the term ``set" to represent the collection of all images and preimages of points in $X$, counted with multiplicities. In this regard, we denote by $F$ the set valued map induced by the variety representation $\Gamma$. We shall henceforth use $\Gamma$ and $F$ interchangeably as per the demands of the situation. In any case both refer to the same object, the holomorphic correspondence under consideration.

\subsection{The dynamics}
So far we have introduced the object that we are going to work with. In order to do dynamics, it is important to define the forward and backward images and how iteration works in this system. For a holomorphic correspondence $\Gamma =\displaystyle  \sum_{j\, =\, 1}^{N} m_{j} \Gamma_{j}$, let $\left | \Gamma \right|$ denote the set $\bigcup \limits _{j=1}^{N} \Gamma_{j}$. Then, for a subset $A$ of $X$, we define
$F(A) := \pi_{2} (\pi_{1}^{-1} (A) \cap \left | \Gamma \right |) \ \ \text{and} \ \ F^{\dagger}(A) := \pi_{1} (\pi_{2}^{-1} (A) \cap \left | \Gamma \right |).$ We call $F(A)$ and $F^{\dagger} (A)$ respectively the image and preimage of the set $A$ under the correspondence $\Gamma$. When $A = \{z\}$, where $z \in X$, we simply denote the above two sets by $F(z)$ and $F^{\dagger} (z)$ respectively. Now, we define $F^{n} (A)$ and $(F^{n})^{\dagger} (A)$ recursively by setting $F^n(A) := F(F^{n - 1} (A))$ and $(F^{n})^{\dagger} (A) = F^{\dagger} ((F^{n - 1})^{\dagger} (A))$. From \cite{londhe_ergodicity}, it is known that, for any $n \in \mathbb{N}$, the sets $F^n(B)$ and $(F^{n})^{\dagger} (B)$ are Borel whenever $B$ is a Borel set in $X$. The cardinality of the set $F(z)$ for a generic point $z \in X$ is called the \emph{forward degree} of $F$ and is denoted by $d_{f}$, while the cardinality of the set $F^{\dagger} (z)$ (where $z$ is a generic point in $X$) is called the \emph{topological degree} of $F$ and is denoted by $d$. A remark from \cite{londhe_recurrence} (see Section 2.1 therein) comments about the topological degree of $\Gamma$ and also sheds light on the generic points. 

Given two holomorphic correspondences on $X$ whose variety representations are $\Gamma_1$ and $\Gamma_2$, one can define the composition $\Gamma_2 \circ \Gamma_1$. The details of the same can be found in Section 2.1 of \cite{bs_p1}. With this definition, one can show that $\Gamma^{\circ n}$, which is the $n-$ fold composition of the holomorphic correspondence $\Gamma$ turns out to be the variety representation corresponding to the set valued map $F^{n}$. Thus, for each $n \in \mathbb{N}$, the $n^{th}$ iterate of $F$, namely $F^n$ is also a holomorphic correspondence as it satisfies Definition \ref{defn:holcorresp}.  

\subsection{Measure theoretic aspects}

We now proceed to studying Borel probability measures on $X$ and how they interact with the dynamics of $F$. Analogous to the measure preserving criterion (which is defined using a push forward construction) that is studied in the case of maps,  $F^{*}$ invariance of measures is studied in the case of correspondences. This is obtained by a pull back construction and is typically done using the theory of currents. We denote the Borel sigma algebra of $X$ by $\mathscr{B} (X)$ and $\mathcal{M} (X)$ denotes the space of all Borel probability measures on $X$. Further, the space of continuous functions from $X$ to $\mathbb{R}$ is denoted by $\mathcal{C} (X, \mathbb{R})$. 

 Let $F$ be a holomorphic correspondence on $X$. For $\mu \in \mathcal{M} (X)$, consider the pull back $F^{*}\mu$ given by,
 \[
 \left \langle F^{*}\mu, \phi  \right \rangle =\displaystyle \bigintsss \limits_{X} \, \,  \displaystyle \sideset{}{'} \sum_{w \, \in \, F^{\dagger} (z)} \phi(w) \, \mathrm{d}\mu,
 \]
where $\phi \in \mathcal{C} (X, \mathbb{R})$. Here, the notation $\sideset{}{'} \sum$ is used to highlight the fact that the points $w$ are repeated according to multiplicity in the summation.

\begin{definition}
\label{defn:fstar}
    Let $F$ be a holomorphic correspondence on $X$. A measure $\mu \in \mathcal{M} (X)$ is said to be $F^{*}$ invariant if $F^{*}\mu = d\mu$.
\end{definition}

When viewed using integrals, the notion of $F^{*}$ invariance translates to the following equality.
\begin{equation}
\label{eqn:inteq}
\bigintsss \limits_{X} \, \,  \displaystyle \sideset{}{'} \sum_{w \, \in \, F^{\dagger} (z)} \phi(w) \, \mathrm{d}\mu = d \int \limits_{X} \phi \, \mathrm{d}\mu, \hspace{2mm} \text{for any} \hspace{2mm} \phi \in \mathcal{C} (X, \mathbb{R}).
\end{equation}

In fact, since $\mathcal{C} (X, \mathbb{R})$ is dense in $L^{q} (\mu)$ for $1 \le q < \infty$, Equation \ref{eqn:inteq} holds for all $\phi \in L^{q} (\mu)$ for all $1 \le q < \infty$. Henceforth, whenever we refer to Equation \ref{eqn:inteq} in this paper, we mean to use this equality for the function $\phi$ from the appropriate space as they appear in that context.

We now state a landmark result due to Dinh and Sibony, regarding the equidistribution of preimages of a point under a holomorphic correspondence. It must be noted that in the paper \cite{dinhsibony_og}, they have proved a vastly general result and we state it in the form that is relevant to our setting. 

\begin{theorem} [Dinh - Sibony, \cite{dinhsibony_og}]
\label{thm:ds}
Let $F$ be a holomorphic correspondence on a compact K\"{a}hler manifold $X$ of dimension $k$ with $d > d_{k - 1}$, where $d_{k - 1}$ is the dynamical degree of $F$ of order $k - 1$ and $d$ is the topological degree of $F$. Then, there is a pluripolar set $\mathcal{E} \subsetneq X$ such that 
\[ \frac{1}{d^{n}} \left( F^{n} \right)^{*} \delta_{z}\ \ \to\ \ \mu_{F},\ \ \ \ \text{in the weak* topology,\ for all}\ z \in X \setminus \mathcal{E}. \] 
Moreover, the measure $\mu_F$ is $F^{*}$ invariant and $\mu_F(\mathcal{E}) = 0$. Here, $\delta_{z}$ denotes the Dirac delta measure at the point $z$. 
\end{theorem}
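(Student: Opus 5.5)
Since the statement is quoted from \cite{dinhsibony_og}, I would not reprove it in full generality. I would follow the Dinh--Sibony strategy specialised to holomorphic correspondences, working on the dual side with functions. Define the normalised operator
\[
\Lambda\phi(z) := \frac{1}{d}\ \sideset{}{'}\sum_{w\,\in\,F^{\dagger}(z)} \phi(w).
\]
By construction $\langle d^{-n}(F^{n})^{*}\delta_z,\phi\rangle = \Lambda^{n}\phi(z)$. The theorem then amounts to the following: there is a probability measure $\mu_F$ and a pluripolar set $\mathcal{E}$ such that, for every continuous $\phi$, we have $\Lambda^{n}\phi(z)\to\langle\mu_F,\phi\rangle$ for all $z\notin\mathcal{E}$.

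\textbf{Step 1: construct $\mu_F$.} Fix a K\"ahler form $\omega$ normalised so that $\int_X\omega^{k}=1$. Define $\mu_F$ as the weak* limit of $d^{-n}(F^{n})^{*}\omega^{k}$, where $\langle d^{-n}(F^n)^*\omega^k,\phi\rangle=\int_X\Lambda^{n}\phi\,\omega^{k}$. To show this limit exists, take $\phi$ smooth and write $dd^{c}\phi$ as a difference of positive closed $(1,1)$-forms of mass $\lesssim\|\phi\|_{\mathcal{C}^2}$. Then $dd^{c}(\Lambda^{n}\phi)=d^{-n}(F^{n})_{*}\,dd^{c}\phi$ is a difference of positive closed currents. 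Their mass is computed cohomologically against $\omega^{k-1}$, which gives
\[
\|dd^{c}\Lambda^{n}\phi\|\ \lesssim\ \left(\frac{d_{k-1}+\varepsilon}{d}\right)^{n}\|\phi\|_{\mathcal{C}^2}.
\]
Since $d>d_{k-1}$, this decays geometrically. Next I would use the standard compactness of quasi-psh functions: a function with $L^{1}$-normalised mean and small $dd^{c}$-mass is $L^{1}$-close to a constant. This shows $\Lambda^{n}\phi-c_n$ decays exponentially in $L^{1}(\omega^{k})$ and in the DSH norm. One then checks that the constants $c_n$ form a Cauchy sequence, and the limit $c_\phi$ defines $\mu_F$. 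The invariance $\Lambda^{*}\mu_F=\mu_F$, i.e.\ $F^{*}\mu_F=d\mu_F$, follows immediately by passing to the limit in $\langle\mu_F,\Lambda\phi\rangle=\lim\int\Lambda^{n+1}\phi\,\omega^{k}$.

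\textbf{Step 2: pointwise convergence outside a pluripolar set.} I expect this to be the main obstacle. Write $\Lambda^{n}\phi-c_\phi=u_n^{+}-u_n^{-}$, where the $u_n^{\pm}$ are quasi-psh with $dd^{c}u_n^{\pm}\ge -\delta^{n}C\omega$ for some $\delta<1$, and with $\|u_n^{\pm}\|_{L^1}\lesssim\delta^{n}$. The key idea is to sum the rescaled functions: $U:=\sum_n\delta^{-n/2}|u_n^{\pm}|$, or better $\sum_n\delta^{-n/2}u_n^{\pm}$ after shifting signs. The geometric decay makes this series converge to a quasi-psh function $U\not\equiv-\infty$. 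Put $\mathcal{E}_\phi:=\{U=-\infty\}$, which is pluripolar. Off $\mathcal{E}_\phi$ the terms $\delta^{-n/2}u_n^{\pm}(z)$ must tend to $0$, which forces $\Lambda^{n}\phi(z)\to c_\phi$. To make the exceptional set independent of $\phi$, I would run this for a countable $\mathcal{C}^2$-dense family $\{\phi_j\}$, combine the corresponding functions into one quasi-psh function $\sum_j 2^{-j}U_j$, and take $\mathcal{E}$ to be its pole set. Density of $\{\phi_j\}$ together with $\|\Lambda^{n}\phi\|_\infty\le\|\phi\|_\infty$ then upgrades the convergence to all continuous $\phi$.

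\textbf{Step 3: $\mathcal{E}\neq X$ and $\mu_F(\mathcal{E})=0$.} $\mathcal{E}$ is a proper subset because it is the pole set of a quasi-psh function that is not identically $-\infty$. For $\mu_F(\mathcal{E})=0$, I would show that $\mu_F$ has bounded (indeed H\"older) quasi-psh potentials. This follows from the same exponential estimates: the functions $\Lambda^{n}\phi$ converge uniformly when $\phi$ is $\mathcal{C}^2$, by an interpolation argument. A measure with bounded potentials does not charge pluripolar sets, which gives $\mu_F(\mathcal{E})=0$. The delicate points I anticipate are two: the cohomological mass estimate controlling $(F^{n})_{*}$ on $(1,1)$-currents via $d_{k-1}$, and making the summation trick in Step 2 produce a genuine quasi-psh function uniformly over the countable family.
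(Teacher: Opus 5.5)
The paper does not prove this statement; it is quoted from Dinh--Sibony. Your Steps 1 and 2 follow their strategy correctly: the cohomological bound on the mass of $d^{-n}(F^n)_*dd^c\phi$ via $d_{k-1}$, the DSH decomposition $\Lambda^n\phi-c_n=u_n^+-u_n^-$, and summing the shifted negative functions $\delta^{-n/2}u_n^{\pm}$ into one quasi-psh function whose pole set is $\mathcal{E}$.

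The genuine gap is in Step 3. Your claim that $\Lambda^n\phi$ converges \emph{uniformly} for $\phi$ of class $\mathcal{C}^2$ is false, and no interpolation argument can give it. Uniform convergence would yield $\Lambda^n\phi(z)\to\langle\mu_F,\phi\rangle$ at every $z\in X$, i.e.\ $\mathcal{E}=\emptyset$. For a counterexample, take $X=\widehat{\mathbb{C}}$ and $F$ the graph of $z\mapsto z^2$, so $k=1$ and $d=2>1=d_0$. Under $F^n$ the point $0$ has the single preimage $0$ with multiplicity $2^n$. Hence $\Lambda^n\phi(0)=\phi(0)$ for all $n$, which in general differs from $\frac{1}{2\pi}\int_0^{2\pi}\phi(e^{i\theta})\,\mathrm{d}\theta$. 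Moreover, for $k\ge 2$ the measure $\mu_F$ is a $(k,k)$-current, so ``bounded quasi-psh potentials of $\mu_F$'' is not the relevant notion. Even in dimension one, the bounded potential of $\mu_F$ comes from uniform convergence of the potentials of $d^{-n}(F^n)^*\omega$, which is a different sequence from $\Lambda^n\phi$.

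What you actually need is that $\mu_F$ integrates every quasi-psh function (the PB property of Dinh--Sibony). This follows from tools you already have.
\begin{itemize}
\item Let $\varphi\le 0$ be quasi-psh with $dd^c\varphi\ge -c\,\omega$, for instance the function whose pole set is $\mathcal{E}$.
\item Write $dd^c\varphi=(dd^c\varphi+c\,\omega)-c\,\omega$, a difference of positive closed currents of mass at most $Cc$. The push-forward mass is still cohomological, so your Step 1 estimate gives $\Lambda^n\varphi=c_n+v_n$ with $\|v_n\|_{\mathrm{DSH}}\le C\delta^n$.
\item Here $c_n=\int_X\Lambda^n\varphi\,\omega^k=\int_X\varphi\,\mathrm{d}\mu_n$, where $\mu_n:=d^{-n}(F^n)^*\omega^k$.
\item The telescoping bound $|c_{n+1}-c_n|=|\int_X\Lambda v_n\,\omega^k|\le C\delta^n$ shows $c_n$ converges to a finite limit. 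This is the same estimate your Cauchy claim in Step 1 requires; it rests on $d^{-1}F^*\omega^k$ integrating DSH functions with a bound in terms of the DSH norm.
\item Since $\varphi$ is upper semicontinuous and bounded above, and $\mu_n\to\mu_F$ weakly, you get $\int_X\varphi\,\mathrm{d}\mu_F\ge\limsup_n\int_X\varphi\,\mathrm{d}\mu_n=\lim_n c_n>-\infty$.
\end{itemize}
Therefore $\mu_F(\{\varphi=-\infty\})=0$, that is, $\mu_F(\mathcal{E})=0$.
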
 
We refer to the measure $\mu_F$ in the above theorem as the Dinh - Sibony measure associated with $F$. One of the main aims of developing these measure theoretic techniques is to study the notion of ergodicity, which was extensively studied in the theory of holomorphic correspondences by Londhe in \cite{londhe_ergodicity}. We shall now state some definitions and results from the same work that shall be useful to us. 

\begin{definition}
    Let $\mu \in \mathcal{M} (X)$ be $F^{*}$ invariant with respect to a holomorphic correspondence $F$ on $X$. A set $B \in \mathscr{B} (X)$ is said to be almost invariant with respect to $F$ and $\mu$ if there exists $B' \in \mathscr{B} (X)$ with $B' \subseteq B$ and $\mu (B) = \mu (B')$ such that $F^{\dagger} (B') \subseteq B$. 
\end{definition}

It is known that if $B \in \mathscr{B} (X)$ is almost invariant with respect to $F$ and $\mu$, then so is $B^{c}$. Further for any $n \in \mathbb{N}$, $B$ is almost invariant with respect to $F^{n}$ and $\mu$ (see Lemma 3.1 in \cite{londhe_ergodicity} for a proof of the same).

\begin{definition}
    Let $F$ be a holomorphic correspondence on $X$ and let $\mu \in \mathcal{M} (X)$ be an $F^{*}$ invariant measure. We say that $F$ is ergodic with respect to $\mu$, if for any $B \in \mathscr{B} (X)$ such that $B$ is almost invariant with respect to $F$ and $\mu$, we have $\mu (B) = 0$ or $\mu (B) = 1$.  
\end{definition}

\begin{theorem} [Londhe, \cite{londhe_ergodicity}]
\label{thm:dserg}
    Let $F$ be a holomorphic correspondence on $X$ satisfying the hypothesis in Theorem \ref{thm:ds} and let $\mu_F$ be the Dinh - Sibony measure associated with $F$. Then, $F$ is ergodic with respect to $\mu_F$.
\end{theorem}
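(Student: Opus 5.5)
The plan is to argue by contradiction, using the equidistribution of Theorem \ref{thm:ds} to identify any normalised restriction of $\mu_F$ to an almost invariant set with $\mu_F$ itself. Write $\mu = \mu_F$ and suppose $B \in \mathscr{B}(X)$ is almost invariant with respect to $F$ and $\mu$, with $0 < \mu(B) < 1$. Set $\nu := \mu|_B / \mu(B) \in \mathcal{M}(X)$. The argument has two steps:
\begin{enumerate}
\item show that $\nu$ is itself $F^{*}$ invariant, and in fact $(F^n)^{*}$ invariant for every $n$;
\item show that every $(F^n)^*$ invariant measure giving zero mass to $\mathcal{E}$ must equal $\mu_F$.
\end{enumerate}
This gives the contradiction $\nu = \mu$, since $\nu(B^c) = 0$ while $\mu(B^c) > 0$.

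\emph{Step 1 (the main point).} Let $B' \subseteq B$ be the full-measure subset with $F^{\dagger}(B') \subseteq B$. Since $B^c$ is also almost invariant, there is $C' \subseteq B^c$ of full measure in $B^c$ with $F^{\dagger}(C') \subseteq B^c$. For $\mu$-a.e.\ $z$ we have $z \in B' \cup C'$. Hence, for $\phi \ge 0$ bounded Borel, we get $\mu$-a.e.
\[ \sideset{}{'}\sum_{w \in F^{\dagger}(z)} \phi(w)\mathbf{1}_B(w) \;=\; \mathbf{1}_{B}(z) \sideset{}{'}\sum_{w \in F^{\dagger}(z)} \phi(w). \]
Now apply Equation \ref{eqn:inteq} to $\phi \mathbf{1}_B \in L^1(\mu)$ and integrate both sides. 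This gives
\[ \int_B \sideset{}{'}\sum_{w \in F^{\dagger}(z)} \phi(w)\, \mathrm{d}\mu \;=\; d \int_B \phi\, \mathrm{d}\mu , \]
that is, $F^{*}\nu = d\nu$. Since $B$ is also almost invariant for $F^n$ (Lemma 3.1 of \cite{londhe_ergodicity}), and $F^n$ is a holomorphic correspondence of topological degree $d^n$, the same computation gives $(F^n)^{*}\nu = d^n \nu$ for all $n$. Alternatively, one can use that the pullback by $F^n$ equals the $n$-fold pullback by $F$, which holds by the composition $\Gamma^{\circ n}$.

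The delicate points in Step 1 are two. First, Equation \ref{eqn:inteq} must be legitimately applied to the discontinuous function $\phi\mathbf{1}_B$, which the paper allows via density in $L^1(\mu)$. Second, the a.e.\ dichotomy for $z$ needs care with multiplicities. I expect this to be the main obstacle, though it seems manageable.

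\emph{Step 2.} Fix $\phi \in \mathcal{C}(X,\mathbb{R})$. Since $\mu(\mathcal{E}) = 0$ and $\nu \ll \mu$, we have $\nu(\mathcal{E}) = 0$. By Theorem \ref{thm:ds}, for $\nu$-a.e.\ $z$,
\[ g_n(z) := \frac{1}{d^n} \sideset{}{'}\sum_{w \in (F^n)^{\dagger}(z)} \phi(w) \;\longrightarrow\; \int_X \phi\, \mathrm{d}\mu . \]
The $g_n$ are bounded by $\|\phi\|_\infty$, so dominated convergence gives $\int g_n\, \mathrm{d}\nu \to \int \phi\, \mathrm{d}\mu$. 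On the other hand, $\int g_n\, \mathrm{d}\nu = \int \phi\, \mathrm{d}\nu$ for every $n$ by the $(F^n)^{*}$ invariance of $\nu$ from Step 1. Hence $\int \phi\, \mathrm{d}\nu = \int \phi\, \mathrm{d}\mu$ for all continuous $\phi$, so $\nu = \mu$ by the Riesz representation theorem. This contradicts $\nu(B^c) = 0 < \mu(B^c)$, so $\mu(B) \in \{0,1\}$ and $F$ is ergodic with respect to $\mu_F$.
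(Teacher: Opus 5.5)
Your proof is correct. The paper itself gives no proof of this statement (it is quoted from Londhe), but its own results supply a different route. Theorem \ref{thm:dsmix} shows that $F$ is mixing with respect to $\mu_F$, using equidistribution plus dominated convergence. Theorem \ref{thm:equiv} then gives ergodicity via Lemma \ref{lem:cesaro} and the implications $3\Rightarrow 2\Rightarrow 1$ of Theorem \ref{thm:ergequiv}, none of which depends on the present statement. In that chain the indicator of the almost invariant set goes in the first slot ($\phi=\chi_B$, $\psi=\chi_{B^c}$). So one must first pass from continuous $\phi$ to $L^2$ functions, using density and the contraction property of Proposition \ref{prop:contra}, and only then use $U_F^j\chi_B=\chi_B$ $\mu$-a.e.

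Your argument uses the same two ingredients, arranged more economically. Your Step 1 amounts to the identity $I_n^F(\phi,\chi_B)=\int_X \phi\,\chi_B\,\mathrm{d}\mu$ for all $n$. It is exactly the invariance of the normalised restriction that the paper borrows from the proof of Londhe's Corollary 5.4 in Proposition \ref{prop:aiislim}. Your Step 2 is Theorem \ref{thm:dsmix} with $\psi=\chi_B$. Since the indicator sits on the $L^1$ side, where continuity is not required, no approximation is needed, and comparing the two gives $\chi_B\,\mu=\mu(B)\,\mu$ directly. Your route also yields a uniqueness by-product: $\mu_F$ is the only $F^{*}$ invariant probability measure not charging $\mathcal{E}$. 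The paper's route instead yields the stronger dynamical property (mixing, hence weak mixing), with ergodicity as a corollary.

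Of the two points you flag, the multiplicity issue is vacuous. Almost invariance constrains the set $F^{\dagger}(z)$, so for $z\in B'\cup C'$ either every preimage lies in $B$ or every one lies in $B^c$, whatever the multiplicities. The real point is applying Equation \ref{eqn:inteq} to the bounded Borel function $\phi\mathbf{1}_B$. This is best justified by a monotone class argument: each sum has $d$ terms counted with multiplicity, so both sides pass to bounded monotone limits. The same argument shows that $\mu$-null sets have $\mu$-null images under $F$, so the pull-back sum is well defined on $L^1(\mu)$ classes. For the $F^n$ version of Step 1 you also need $\mu$ to be $(F^n)^{*}$ invariant. This is recalled in Section \ref{sec:five}, and it also follows from your remark that $(F^n)^{*}=(F^{*})^n$.
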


Londhe also proved an analogue of the Birkhoff ergodic theorem for holomorphic correspondences, which is the main result of \cite{londhe_ergodicity}. 

\begin{theorem} [Londhe, \cite{londhe_ergodicity}]
\label{thm:ergthm}
    Let $F$ be a holomorphic correspondence on $X$ with topological degree $d$ and let $\mu \in \mathcal{M} (X)$. Let $F$ be ergodic with respect to $\mu$. Then for $\phi \in L^1(\mu)$, the sequence $\dfrac{1}{n} \displaystyle \sum_{j=0}^{n-1} \ \displaystyle \sideset{}{'}\sum_{w \, \in \, (F^{j})^{\dagger} (z)} \displaystyle \frac{\phi(w)}{d^j}$ converges $\mu$ almost everywhere to $\displaystyle{\int \limits_{X} \, \phi \, d\mu}$.
\end{theorem}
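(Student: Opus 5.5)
The plan is to recast the statement as a pointwise ergodic theorem for a positive Markov operator, and then use ergodicity to identify the limit. Throughout, $\mu$ is taken to be $F^{*}$ invariant, as is implicit in the notion of ergodicity. Define
\[ T\phi(z) := \frac{1}{d}\ \sideset{}{'}\sum_{w \, \in \, F^{\dagger}(z)} \phi(w). \]
For each $z$, the fibre $F^{\dagger}(z)$ counted with multiplicity has exactly $d$ points. Hence $T$ is positive and satisfies $T\mathbf{1} = \mathbf{1}$, so $\|T\phi\|_\infty \le \|\phi\|_\infty$. By Equation \ref{eqn:inteq}, $\int T\phi \, \mathrm{d}\mu = \int \phi\, \mathrm{d}\mu$ for $\phi \ge 0$, and therefore $\|T\phi\|_{L^1(\mu)} \le \|\phi\|_{L^1(\mu)}$.

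The first point to check is that $T$ is well defined on $\mu$-classes. If $\mu(A) = 0$, then $\int T\mathbf{1}_A \, \mathrm{d}\mu = \mu(A) = 0$, so $T\mathbf{1}_A = 0$ $\mu$-a.e. Borel measurability of $T\phi$ follows from the Borel measurability of images and preimages quoted from \cite{londhe_ergodicity}, together with a monotone class argument starting from continuous $\phi$. Next, since $\Gamma^{\circ j}$ is the variety representation of $F^{j}$ and multiplicities multiply under composition, we have
\[ T^{j}\phi(z) = \frac{1}{d^{j}} \ \sideset{}{'}\sum_{w \, \in \, (F^{j})^{\dagger}(z)} \phi(w). \]
So the averages in the statement are exactly $\frac{1}{n}\sum_{j=0}^{n-1} T^{j}\phi$.

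$T$ is thus a positive contraction on both $L^{1}(\mu)$ and $L^{\infty}(\mu)$. The Hopf--Dunford--Schwartz ergodic theorem then gives, for every $\phi \in L^1(\mu)$, $\mu$-a.e. convergence of these averages to some $P\phi \in L^{1}(\mu)$. For $\phi \in L^{2}(\mu)$, the mean ergodic theorem applies as well: $T$ is an $L^2$ contraction by Jensen, since $(T\phi)^2 \le T(\phi^2)$. It yields $L^2$ convergence, so $TP\phi = P\phi$ and $\int P\phi \, \mathrm{d}\mu = \int \phi \, \mathrm{d}\mu$. For general $\phi \in L^{1}(\mu)$ these two properties pass to the limit through $L^{1}$ density, using that $P$ is an $L^1$ contraction by Fatou.

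It then remains to show that any $g \in L^{1}(\mu)$ with $Tg = g$ is $\mu$-a.e. constant; this is the step where ergodicity enters and which I expect to be the main obstacle. First reduce to bounded $g$: $T$ commutes with truncation up to inequalities, and $\min(g,c)$ is $T$-subinvariant with equal integral, hence invariant. For bounded invariant $g$, Jensen gives $g^{2} = (Tg)^{2} \le T(g^{2})$, while $\int T(g^{2}) \, \mathrm{d}\mu = \int g^{2} \, \mathrm{d}\mu$. Hence equality holds a.e., that is,
\[ \frac{1}{d}\, \sideset{}{'}\sum_{w \, \in \, F^{\dagger}(z)} \bigl(g(w) - g(z)\bigr)^{2} = 0 \quad \text{for } \mu\text{-a.e. } z, \]
so $g$ is constant on each fibre $F^{\dagger}(z)$ $\cup \{z\}$ off a null set $N$.

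Now let $B = \{g > c\}$ and $B' = B \setminus N$. Then $\mu(B') = \mu(B)$ and $F^{\dagger}(B') \subseteq B$, so $B$ is almost invariant. A technical point is that $g$ is only a class; one fixes a Borel representative and enlarges $N$, using again that $T$ respects null sets. Ergodicity forces $\mu(B) \in \{0,1\}$ for every $c$, so $g$ is a.e. constant. Since $\int P\phi \, \mathrm{d}\mu = \int \phi \, \mathrm{d}\mu$, that constant is $\int_X \phi \, \mathrm{d}\mu$, which completes the argument.
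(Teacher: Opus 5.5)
Your argument is correct. The paper contains no proof of this statement: Theorem \ref{thm:ergthm} is quoted from Londhe and used as a black box, in Theorem \ref{thm:ergequiv} and Proposition \ref{thm:ineqeg}. There is therefore no internal argument to compare yours with, and it has to stand on its own, which it does.

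Your $T$ is exactly the Koopman operator $U_F$ introduced in Section \ref{sec:two}. Your route is complete:
\begin{enumerate}
\item realise the averages as Ces\`aro means of $U_F$;
\item obtain a.e.\ convergence from the Dunford--Schwartz theorem, since $U_F$ contracts both $L^1(\mu)$ and $L^\infty(\mu)$;
\item identify the limit as a $U_F$-invariant function with integral $\int_X \phi \, \mathrm{d}\mu$, via the $L^2$ mean ergodic theorem and $L^1$-density;
\item show that $U_F$-invariant functions are a.e.\ constant.
\end{enumerate}
The last step is handled well. The equality case in $(U_F g)^2 \le U_F(g^2)$ forces $g$ to be constant on $\{z\} \cup F^{\dagger}(z)$ for a.e.\ $z$. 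That is precisely what makes each level set $\{g > c\}$ almost invariant in the paper's sense, with $B' = B \setminus N$.

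Two inputs deserve to be stated explicitly rather than in passing.

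First, the fact that every fibre $F^{\dagger}(z)$ has exactly $d$ points counted with multiplicity is not part of the paper's definitions, which define $d$ through generic fibres only. It holds under the standard convention that a point of $\pi_2^{-1}(z) \cap \Gamma_j$ is counted $m_j$ times its local multiplicity for the branched covering $\pi_2|_{\Gamma_j}$. This fact is what gives $U_F \mathbf{1} = \mathbf{1}$, the $L^\infty$ bound, and the variance identity you use, so the convention should be made explicit.

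Second, for measurability the relevant fact is that $U_F\phi$ is continuous for continuous $\phi$, again by the same convention; after that your monotone class argument runs. Borel measurability of the sets $F^n(B)$ and $(F^n)^{\dagger}(B)$ is not the input you need.

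Finally, your proof gives more than the statement. Without assuming ergodicity, the averages converge a.e.\ to a $U_F$-invariant function with the correct integral. Ergodicity enters only to force that function to be constant.
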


\subsection{The Koopman operator}
\label{Koopman op}

We end this section by introducing an operator that is important for our study. This operator was briefly mentioned in the work \cite{londhe_ergodicity}. This is the analogue of what is called the Koopman operator associated to a continuous map on a compact space. Thus, we call it as the Koopman operator associated to a holomorphic correspondence.

\begin{definition}
    Let $F$ be a holomorphic correspondence on $X$ with topological degree $d$ and let $\mu \in \mathcal{M} (X)$. For $1 \le q < \infty$, define $U_F: L^{q} (\mu) \to L^{q} (\mu)$ by $\left(U_F (\phi)\right) (z) = \dfrac{1}{d} \displaystyle \sideset{}{'} \sum \limits_{w \, \in \, F^{\dagger} (z)} \phi (w)$. The operator $U_F$ is called the Koopman operator associated to $F$. 
\end{definition}

It is easy to see that $U_F$ is linear. Further, one can consider the $n^{th}$ iterate of the operator $U_F$ which we denote by $U_F^{n}$. Observe that, $U_F ^{n} :  L^{q} (\mu) \to L^{q} (\mu)$ is given by $\left (U_{F}^{n} (\phi) \right) (z) = \dfrac{1}{d^n} \displaystyle \sideset{}{'} \sum \limits_{w \, \in \, \left(F^{n} \right)^{\dagger} (z)} \phi (w) = \left( U_{F^n}  (\phi) \right) (z)$. We now prove a basic, yet useful result about $U_F$. 

\begin{proposition}
\label{prop:contra}
    Let $F$ be a holomorphic correspondence on $X$ with topological degree $d$ and $\mu$ be an $F^{*}$ invariant measure. Then, $U_F$ is a contraction for any $1 \le q < \infty$.
\end{proposition}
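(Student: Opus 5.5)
The plan is to bound $\|U_F \phi\|_{L^q(\mu)}$ pointwise by a convexity (Jensen) estimate and then integrate, using the $F^{*}$ invariance in the form of Equation \ref{eqn:inteq}. Fix $1 \le q < \infty$ and $\phi \in L^{q}(\mu)$. For a generic $z$, the multiset $F^{\dagger}(z)$ has exactly $d$ points counted with multiplicity. At non-generic points the count may differ, but we always have $\sideset{}{'}\sum_{w \in F^{\dagger}(z)} 1 \le d$. This holds because the number of preimages counted with multiplicity is the degree of $\left.\pi_1\right|_{|\Gamma|}$ over $z$, which equals $d$ away from a proper analytic set. In fact the constant function $1$ in Equation \ref{eqn:inteq} gives $\int \sideset{}{'}\sum_{w\in F^\dagger(z)} 1 \, \mathrm{d}\mu = d$, so the count equals $d$ for $\mu$-a.e. $z$ if the pointwise bound $\le d$ holds.

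The key pointwise step is as follows. By convexity of $t \mapsto |t|^q$ (or Hölder's inequality for finite sums with weights $1/d$),
\[
\left|(U_F\phi)(z)\right|^{q} = \left| \frac{1}{d}\sideset{}{'}\sum_{w \in F^{\dagger}(z)} \phi(w)\right|^{q} \le \frac{1}{d}\sideset{}{'}\sum_{w \in F^{\dagger}(z)} |\phi(w)|^{q}.
\]
When the multiset has exactly $d$ elements this is Jensen's inequality for the uniform average. When it has fewer than $d$ elements, pad the multiset with zeros up to $d$ terms and apply Jensen's inequality again.

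Next we integrate against $\mu$ and apply Equation \ref{eqn:inteq} to the function $|\phi|^{q} \in L^{1}(\mu)$, which the paper allows via density:
\[
\int_X |U_F\phi|^{q}\, \mathrm{d}\mu \le \frac{1}{d}\int_X \sideset{}{'}\sum_{w\in F^\dagger(z)} |\phi(w)|^{q}\, \mathrm{d}\mu = \int_X |\phi|^{q}\, \mathrm{d}\mu .
\]
Hence $\|U_F\phi\|_q \le \|\phi\|_q$. The same computation shows that $U_F$ is well defined on $L^q(\mu)$ classes. Indeed, if $\phi = 0$ $\mu$-a.e., then $\int \sideset{}{'}\sum |\phi(w)|\,\mathrm{d}\mu = d\int|\phi|\,\mathrm{d}\mu = 0$, so $U_F\phi = 0$ $\mu$-a.e. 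Together with linearity, this gives that $U_F$ is a contraction.

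The main obstacle is the measure-theoretic bookkeeping rather than the inequality. We must justify Equation \ref{eqn:inteq} for $|\phi|^q \in L^1(\mu)$, which requires approximating $|\phi|^q$ by continuous functions and passing to the limit on the left side. That limit passage is itself controlled by the same invariance identity applied to $|\phi_n - |\phi|^q|$, which is a monotone/$L^1$ argument. We must also confirm that $z \mapsto \sideset{}{'}\sum_{w\in F^\dagger(z)}\phi(w)$ is measurable. I would take the paper's stated extension of Equation \ref{eqn:inteq} to $L^q$ as given and only remark on the preimage count bound.
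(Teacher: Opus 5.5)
Your proposal is correct and is essentially the paper's proof: the paper applies H\"older's inequality to the finite sum over $F^{\dagger}(z)$, which gives the factor $d^{q/p}$ and is exactly your Jensen step with uniform weights $1/d$, and then integrates using Equation \ref{eqn:inteq} for $|\phi|^{q}$, with $q=1$ handled separately. Your zero-padding for fibers with fewer than $d$ points and your check that $U_F$ respects $\mu$-a.e.\ classes are extra care the paper leaves implicit; if you want to justify the $L^{1}$ form of Equation \ref{eqn:inteq} yourself rather than cite it, note that both sides define finite Borel measures agreeing on $\mathcal{C}(X,\mathbb{R})$, which avoids the circularity of applying the identity to $|\phi_n - |\phi|^{q}|$.
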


\begin{proof}
   For $q=1$, the proposition follows from Equation \ref{eqn:inteq}. Now let $q > 1$ and let $\phi \in L^q$. Consider $\left \| U_F (\phi) \right \|_{q}^{q} = \displaystyle \bigintsss \limits_{X} \left | \dfrac{1}{d} \displaystyle \sideset{}{'} \sum \limits_{w \, \in \, F^{\dagger} (z)} \phi(w) \right |^q \, \mathrm{d}\mu.$ \\
    Choosing $p$ such that $\frac{1}{p} + \frac{1}{q} = 1$, applying H\"{o}lder's inequality and by using Equation \ref{eqn:inteq}, we get
    \[
    \left \| U_F (\phi) \right \|_{q}^{q} \leq \dfrac{d^{\frac{q}{p}}}{d^q} \bigintsss \limits_{X} \displaystyle \sideset{}{'} \sum \limits_{w \, \in \, F^{\dagger} (z)} |(\phi(w))^{q}| \, \mathrm{d}\mu =\dfrac{d^{\frac{q}{p}+1}}{d^q} \int \limits_{X} | \phi|^{q} \, \mathrm{d}\mu = \left \| \phi \right \|_{q}^{q},
    \]
    which completes the proof.
\end{proof}

It also follows that the linear operator $U_{F}^{n}$ is a contraction for any $n > 1$ as well.

\section{Mixing definition and motivation} 
\label{sec:three}

In this section, we define mixing notions related to a holomorphic correspondence. We further elaborate on the motivation behind our choice of this particular definition. Before going into that, let us first start from the case of maps. Consider a compact metric space $K$ and a map $T: K \to K$ that is invariant with respect to a Borel probability measure $\nu$ on $K$. A typical motivation for the definition of mixing of $T$ with respect to $\nu$ is given by the concept of correlations from statistics. 

For real valued functions $f, g \in L^{2} (\nu)$, we define the sequence of correlations corresponding to $f$ and $g$ to be,  
\begin{equation}
\label{eqn:correl}
    C_n (f, g) = \int \limits_{K} (f \circ T^n) g  \, d\nu - \int \limits_{K} f \, d\nu \int \limits_{K} g \, d\nu.
\end{equation}

Let $A$ and $B$ be Borel subsets of $K$. Setting $f = \chi_{\raisebox{-.7ex}{$\scriptstyle A$}}$ and $g = \chi_{\raisebox{-.7ex}{$\scriptstyle B$}}$ in the Equation \ref{eqn:correl}, we get $C_n (A, B) := C_n (\chi_{\raisebox{-.7ex}{$\scriptstyle A$}}, \chi_{\raisebox{-.7ex}{$\scriptstyle B$}}) = \nu \left( T^{-n} (A) \cap B \right) - \nu(A) \nu(B). \hspace{2mm} T$ is said to be mixing with respect to $\nu$ if the sequence $C_n (A, B)$ converges to $0$. In fact, it can be shown that it is equivalent to showing that the quantity $C_n(f,g)$ given in Equation \ref{eqn:correl} converges to $0$ for all $f, g \in L^2(\nu)$.

Note that as $T$ is invariant under $\nu$, we have $\nu (T^{-n} (A)) = \nu (A)$, for any $n \in \mathbb{N}$ and for any Borel subset $A \subseteq K$. Thus, if T is mixing, it essentially means that any two events are asymptotically independent. Inspired by this, we now develop the definition of mixing for holomorphic correspondences defined on a compact connected complex manifold $X$. The operator $U_T : L^{2} (\nu) \rightarrow L^{2} (\nu)$ given by $U_{T} (f) = f \circ T$ is called the Koopman operator associated to the map $T$. As one may readily observe, the iterates of $U_T$ are found in the definition of $C_n (f,g)$ as in Equation \ref{eqn:correl}. This motivates us to use the Koopman operator associated to a holomorphic correspondence, in order to generalize the definition of mixing to the case of holomorphic correspondences.

Let $F$ be a holomorphic correspondence on $X$ with $\mu$ being an $F^{*}$ invariant measure. Let $\phi, \, \psi \in L^{1} (\mu)$. For $n \in \mathbb{N}$, we define $I_{n}^{F} (\phi, \psi) := \displaystyle \int \limits_{X} \left(U_{F}^{n} (\phi)\right) \hspace{-1mm} (z) \psi(z) \, \mathrm{d} \mu.$ Also, we denote the integral $\displaystyle \int \limits_{X} \phi \, \mathrm{d}\mu$ by the notation $I(\phi)$. 

\begin{definition}
\label{defn:mixing}
Let $F$ be a holomorphic correspondence on $X$ with topological degree $d$ and let $\mu$ be an $F^{*}$ invariant measure. Then, $F$ is said to be 
\begin{enumerate}
    \item mixing with respect to $\mu$ if $\lim \limits_{n \, \to \, \infty} I_{n}^{F} (\phi, \psi) = I(\phi)I(\psi)$, for all $\phi \in \mathcal{C} (X, \mathbb{R})$ and $\psi \in L^{1} (\mu)$. 
    \item weakly mixing with respect to $\mu$ if $\lim \limits_{n \, \to \, \infty} \dfrac{1}{n} \displaystyle \sum \limits_{j \, =\, 0}^{n-1} \left|I_{j}^{F} (\phi, \psi) - I(\phi)I(\psi) \right| = 0$, for all $\phi \in \mathcal{C} (X, \mathbb{R})$ and $\psi \in L^{1} (\mu)$. 
\end{enumerate}
\end{definition}

The following theorem identifies a nice class of correspondences that satisfy the definition of mixing with respect to an appropriate measure. Much like the case of rational maps where the maps had the property of mixing with respect to the measures constructed in the equidistribution results in \cite{lyubich_equi} and \cite{flm_equi}, we see that the analogous measure obtained from Theorem \ref{thm:ds} by Dinh and Sibony, also satisfies the definition of mixing. 

\begin{theorem}
\label{thm:dsmix}
Let $F$ be a holomorphic correspondence on $X$, satisfying the hypothesis of Theorem \ref{thm:ds}. Let $\mu_F$ denote the Dinh - Sibony measure associated with $F$. Then $F$ is mixing with respect to $\mu_F.$ 
\end{theorem}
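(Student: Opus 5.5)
The plan is to reduce mixing to a pointwise statement about the iterates of the Koopman operator, and then pass to the limit under the integral by dominated convergence. The input is Theorem \ref{thm:ds}, which gives a pointwise limit for $\frac{1}{d^n}(F^n)^*\delta_z$ off a $\mu_F$-null set.

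First, fix $\phi \in \mathcal{C}(X,\mathbb{R})$ and $z \in X$. Apply the defining formula of the pull-back $F^{*}\mu$ (with $F^n$ in place of $F$) to the Dirac measure $\mu = \delta_z$. This gives
\[
\frac{1}{d^n}\left\langle (F^n)^*\delta_z, \phi\right\rangle \;=\; \frac{1}{d^n}\sideset{}{'}\sum_{w\,\in\,(F^n)^{\dagger}(z)} \phi(w) \;=\; \left(U_F^n(\phi)\right)(z),
\]
where we use the observation after the definition of the Koopman operator that $U_F^n = U_{F^n}$, normalised by $d^n$. I need to check two things here. The topological degree of $F^n$ is $d^n$, which is exactly the normalisation used in Theorem \ref{thm:ds}. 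The identity holds for every $z$, not just generic ones, since the preimages are counted with multiplicity by definition.

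Next, Theorem \ref{thm:ds} gives a pluripolar set $\mathcal{E}$ with $\mu_F(\mathcal{E}) = 0$ such that, for every $z \in X\setminus\mathcal{E}$, weak* convergence tested against the continuous function $\phi$ yields
\[
\left(U_F^n(\phi)\right)(z) \to \int_X \phi\, \mathrm{d}\mu_F = I(\phi).
\]
Hence $U_F^n(\phi) \to I(\phi)$ holds $\mu_F$-almost everywhere. This is the step that needs continuity of $\phi$, and it explains why the definition of mixing takes $\phi \in \mathcal{C}(X,\mathbb{R})$.

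Finally, for $\psi \in L^1(\mu_F)$ we need a domination. Since $(F^n)^{\dagger}(z)$ contains exactly $d^n$ points counted with multiplicity, we get the pointwise bound
\[
\left|\left(U_F^n(\phi)\right)(z)\,\psi(z)\right| \le \|\phi\|_\infty\, |\psi(z)|.
\]
The right-hand side is integrable and independent of $n$. By the dominated convergence theorem,
\[
I_n^F(\phi,\psi) \to \int_X I(\phi)\,\psi\, \mathrm{d}\mu_F = I(\phi)\,I(\psi),
\]
which is mixing with respect to $\mu_F$ as in Definition \ref{defn:mixing}.

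I expect the main obstacle to be the first step: rigorously identifying $\frac{1}{d^n}\langle (F^n)^*\delta_z,\phi\rangle$ with $U_F^n(\phi)(z)$ for all $z$. This means confirming that the Dinh--Sibony pull-back of a Dirac mass agrees with the multiplicity-counted sum used in this paper's definition of $F^*$, including at non-generic points, and that the cardinality bound $d^n$ holds with multiplicity everywhere. Once that is settled, the remaining steps are standard.
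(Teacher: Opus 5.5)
Your proposal is correct and follows the paper's proof: Theorem \ref{thm:ds} gives $U_F^n(\phi)\to I(\phi)$ pointwise $\mu_F$-almost everywhere, and multiplying by $\psi$ and applying dominated convergence finishes the argument. You also supply the explicit dominating function $\|\phi\|_\infty|\psi|$, which the paper leaves implicit, and the identification you flag as the main obstacle is immediate from the paper's definition of the pull-back applied to $\delta_z$ with $F^n$ in place of $F$.
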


\begin{proof}
Let $\phi \in \mathcal{C} (X, \mathbb{R})$ and $\psi \in L^{1} (\mu)$. By Theorem \ref{thm:ds}, 
\[
\lim \limits_{n \, \to \, \infty}\left( \frac{1}{d^{n}} \displaystyle \sideset{}{'}  \sum\limits_{w \, \in \, (F^n){\dagger}(z)} \phi(w)\right)= \int \limits_{X} \phi \, \mathrm{d}\mu_F, \ \text{for} \ \mu - \text{a.e} \ \ z \in X.
\]
Multiplying on both sides by $\psi(z)$ and then integrating (the integral is essentially over the set of full measure where above convergence happens) we get
\[
\bigintsss \limits_{X} \lim \limits_{n \, \to \, \infty} \left( \frac{1}{d^{n}} \displaystyle \sideset{}{'}  \sum\limits_{w \, \in \, (F^n){\dagger}(z)} \phi(w)\right)\psi(z) \, \mathrm{d}\mu_F= \int \limits_{X} \phi \, \mathrm{d}\mu_F \int \limits_{X} \psi \, \mathrm{d}\mu_F. 
\]
Now, an application of the dominated convergence theorem completes the proof.
\end{proof}

We now make a few comments regarding the motivation behind Definition \ref{defn:mixing}. A simple verification shows that, $I_n ( \chi_{\raisebox{-.7ex}{$\scriptstyle A$}},  \chi_{\raisebox{-.7ex}{$\scriptstyle B$}}) \le \displaystyle \int \limits_{X} \chi_{\raisebox{-.7ex}{$\scriptstyle F^n(A)$}} \chi_{\raisebox{-.7ex}{$\scriptstyle B$}} \, \mathrm{d}\mu = \mu(F^n(A) \cap B)$. In the case when $\mu = \mu_F$, the Dinh - Sibony measure associated with $F$ (which is one of the most well studied invariant measures with regards to a holomorphic correspondence), using Theorem \ref{thm:ds} and the dominated convergence theorem, one can show that $\lim \limits_{n \, \to \, \infty} I_n ( \chi_{\raisebox{-.7ex}{$\scriptstyle A$}},  \chi_{\raisebox{-.7ex}{$\scriptstyle B$}}) = \mu (A) \mu(B)$. This gives, $\lim \limits_{n \, \to \, \infty} \mu(F^n(A) \cap B) \ge \mu(A) \mu(B)$, provided the limit exists. Combining with what we obtained in Theorem \ref{thm:dsmix}, we observe that the convergence of the sequence $I_n ( \chi_{\raisebox{-.7ex}{$\scriptstyle A$}},  \chi_{\raisebox{-.7ex}{$\scriptstyle B$}})$ and that of the sequence $\mu(F^n(A) \cap B)$ to appropriate limits are not equivalent, unlike the case of maps. This marks a significant change in the phenomenon of mixing in the case of maps from that of correspondences.

Here are a few heuristic justifications to convince the reader of why this hurdle appears. It mainly arises because of the pull back construction of the notion of invariant measures, which in turn is necessitated due to the multi - valued nature of $F$. Note that, unlike in the case of maps, for a holomorphic correspondence $F$ with $\mu$ being any $F^{*}$ invariant measure, we only have $\mu(A) \le \mu(F^{\dagger} (A))$ and $\mu(A) \le \mu(F(A))$ for all $A \in \mathscr{B}(X)$ (the interested reader may refer to Lemma 5.5 and Lemma 5.6 in \cite{londhe_recurrence} for a proof of the same). Thus, the idea of measuring asymptotic independence using measures of the events $F^n(A)$ and $B$ breaks down in this setting. However, $F^{*}$ invariance property is known to work well with regards to the use of integrals, since the measure $F^{*}\mu$ itself is constructed via the Riesz representation theorem. This strategy can be found to be used in a few results in the works like \cite{londhe_ergodicity}, \cite{bs_p1} and \cite{dinhsibony_og}. Thus, using the idea of correlations determined using integrals, we work with the above definition (the one given in Definition \ref{defn:mixing}) of mixing throughout the paper. The spirit of these heuristic justifications are further captured in the proofs of Propositions \ref{thm:ineqeg} and \ref{prop:aiislim} and Example \ref{example}.

\section{Ergodicity through the lens of mixing}
\label{sec:four}
In this section we prove various results that show some interplay between the notions of ergodicity and that of mixing in holomorphic correspondences. We give characterisation results based on integral and measure theoretic techniques. As promised in Section \ref{sec:three}, we also state and prove a few results that resonate with the heuristic justifications provided there. We begin with a theorem that characterises ergodicity of holomorphic correspondences.
\begin{theorem}
\label{thm:ergequiv}
    Let $F$ be a holomorphic correspondence on $X$ with $\mu$ being an $F^{*}$ invariant measure. Let $d$ be the topological degree of $F$. Then the following are equivalent.
    \begin{enumerate}
        \item $F$ is ergodic with respect to $\mu$.
        \item For any $\phi,\psi \in L^2(\mu)$, we have $\lim \limits_{n \, \to \, \infty}\dfrac{1}{n} \sum \limits_{j=0}^{n-1} I_{j}^{F} (\phi, \psi) = I(\phi) I(\psi).$
\item For any $\phi\in \mathcal{C}(X,\mathbb{R})$ and $\psi \in L^1(\mu)$, we have
$\lim \limits_{n \, \to \, \infty}\dfrac{1}{n} \sum \limits_{j=0}^{n-1} I_{j}^{F} (\phi, \psi) = I(\phi) I(\psi).$
    \end{enumerate}
\end{theorem}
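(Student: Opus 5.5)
The plan is to prove the implications (1)$\Rightarrow$(2), (1)$\Rightarrow$(3), and then (2)$\Rightarrow$(1), (3)$\Rightarrow$(1).

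For the forward implications we use Londhe's ergodic theorem (Theorem \ref{thm:ergthm}). Writing $A_n\phi := \frac{1}{n}\sum_{j=0}^{n-1} U_F^j\phi$, we have
\[
\frac1n\sum_{j=0}^{n-1} I_j^F(\phi,\psi)=\int_X (A_n\phi)\,\psi\,\mathrm{d}\mu .
\]
If $F$ is ergodic and $\phi\in L^1(\mu)$, then $A_n\phi\to I(\phi)$ $\mu$-a.e.

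For (3), take $\phi\in\mathcal{C}(X,\mathbb{R})$. Then $|A_n\phi|\le\|\phi\|_\infty$ pointwise, since each $U_F^j$ averages $d^j$ values. Hence $|A_n\phi\,\psi|\le\|\phi\|_\infty|\psi|\in L^1$, and dominated convergence gives the limit $I(\phi)I(\psi)$.

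For (2), take $\phi,\psi\in L^2$. Here the domination fails, so we argue differently. By Proposition \ref{prop:contra}, $\|A_n\phi\|_2\le\|\phi\|_2$. Approximate $\phi$ in $L^2$ by a continuous $\phi_0$ (or by a bounded function). For bounded $\phi_0$, the sequence $A_n\phi_0$ is uniformly bounded and converges a.e. to $I(\phi_0)$, so it converges in $L^2$ by bounded convergence. Then
\[
\|A_n\phi-I(\phi)\|_2\le 2\|\phi-\phi_0\|_2+\|A_n\phi_0-I(\phi_0)\|_2 ,
\]
using $|I(\phi)-I(\phi_0)|\le\|\phi-\phi_0\|_2$. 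Thus $A_n\phi\to I(\phi)$ in $L^2$, and Cauchy--Schwarz against $\psi$ gives (2).

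For the converses, let $B$ be almost invariant with $B'\subseteq B$ as in the definition, $\mu(B\setminus B')=0$. The key step is to show that $U_F^j\chi_B\ge\chi_{B}$ $\mu$-a.e.
- For $z\in B'$, every preimage lies in $F^\dagger(B')\subseteq B$.
- By Lemma 3.1 of \cite{londhe_ergodicity}, $B$ is almost invariant for $F^j$ with some $B'_j$ of full measure in $B$. Hence $(F^j)^\dagger(z)\subseteq B$ for a.e. $z\in B$, so $U_F^j\chi_B=1$ a.e. on $B$.
- Similarly, $B^c$ is almost invariant, so $U_F^j\chi_{B^c}=1$ a.e. on $B^c$. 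Since $U_F^j\chi_B+U_F^j\chi_{B^c}=1$, this gives $U_F^j\chi_B=0$ a.e. on $B^c$.

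Hence $U_F^j\chi_B=\chi_B$ a.e. for every $j$.

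For (2)$\Rightarrow$(1), take $\phi=\psi=\chi_B$. Every term of the Cesàro average equals $\mu(B)$, so (2) forces $\mu(B)=\mu(B)^2$, i.e. $\mu(B)\in\{0,1\}$.

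For (3)$\Rightarrow$(1), the obstacle is that $\phi$ must be continuous. Take $\psi=\chi_B\in L^1$ and use the duality furnished by Equation \ref{eqn:inteq}. Polarizing the invariance identity, or directly from the construction of $F^*\mu$ via the Riesz representation theorem, we expect
\[
\int_X (U_F^j\phi)\,\chi_B\,\mathrm{d}\mu=\int_X \phi\,\chi_B\,\mathrm{d}\mu \quad\text{whenever } U_F^j\chi_B=\chi_B\text{ a.e.},
\]
since $B\times X$ and $X\times B$ agree $\mu$-essentially on the graph. Establishing this is the main difficulty. Our first attempt is to write $\int (U_F^j\phi)\chi_B\,\mathrm{d}\mu=\int (U_F^j\phi)(U_F^j\chi_B)\,\mathrm{d}\mu$ and use the fact that on a.e. fibre over $B$ all preimages lie in $B$. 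Applying Equation \ref{eqn:inteq} to $\phi\chi_B$ then gives $\int U_F^j(\phi\chi_B)\,\mathrm{d}\mu=\int\phi\chi_B\,\mathrm{d}\mu$, and $U_F^j(\phi\chi_B)$ equals $(U_F^j\phi)\chi_B$ a.e., because the preimages of a.e. point of $B$ lie in $B$ and those of a.e. point of $B^c$ lie in $B^c$.

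Granting this, (3) yields $\int\phi\chi_B\,\mathrm{d}\mu=I(\phi)\mu(B)$ for all continuous $\phi$. By density this extends to $\phi=\chi_B$, so $\mu(B)=\mu(B)^2$ and $\mu(B)\in\{0,1\}$.
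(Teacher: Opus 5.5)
Your argument is correct. For (1)$\Rightarrow$(3) and (2)$\Rightarrow$(1) it agrees with the paper's. The paper tests (2) with $\phi=\chi_B$, $\psi=\chi_{B^c}$ instead of $\phi=\psi=\chi_B$, which is immaterial. Your derivation of $U_F^j\chi_B=\chi_B$ a.e.\ from the almost invariance of $B$ and $B^c$ under $F^j$, together with $U_F^j 1=1$, spells out a step the paper only asserts.

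The difference is the logical scheme. The paper closes the cycle (1)$\Rightarrow$(3)$\Rightarrow$(2)$\Rightarrow$(1). Its step (3)$\Rightarrow$(2) is a pure approximation: replace $\phi\in L^2(\mu)$ by a continuous $\eta$ and control the error with Cauchy--Schwarz and Proposition \ref{prop:contra}. So Londhe's ergodic theorem is used only once, and $\chi_B$ only has to be fed into the $L^2$ condition (2).

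You instead prove (1)$\Leftrightarrow$(2) and (1)$\Leftrightarrow$(3) separately. This costs two extra arguments but yields more:
\begin{itemize}
\item Your (1)$\Rightarrow$(2) gives $L^2$ convergence of $\frac{1}{n}\sum_{j=0}^{n-1}U_F^j\phi$ to the constant $I(\phi)$, which is a mean ergodic theorem for $U_F$.
\item Your (3)$\Rightarrow$(1) isolates the identity $U_F^j(\phi\chi_B)=(U_F^j\phi)\chi_B$ $\mu$-a.e.\ for almost invariant $B$. Hence $\int_X (U_F^j\phi)\chi_B\,\mathrm{d}\mu=\int_X\phi\chi_B\,\mathrm{d}\mu$, so (3) can be tested directly against continuous $\phi$.
\end{itemize}

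That last argument is complete as you wrote it. It only needs Equation \ref{eqn:inteq} for $F^j$, applied to the bounded Borel function $\phi\chi_B$. This is legitimate because $\mu$ is $(F^j)^*$ invariant, or alternatively by iterating $\int_X U_F\eta\,\mathrm{d}\mu=\int_X\eta\,\mathrm{d}\mu$. So you can drop the hedging (``we expect'', ``granting this'') and the heuristic about $B\times X$ and $X\times B$.
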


\begin{proof}
\textbf{1 $\implies$ 3} \ Let $\phi\in \mathcal{C}(X,\mathbb{R})$ and $\psi \in L^1(\mu)$. As $F$ is ergodic with respect to $\mu$, by Theorem \ref{thm:ergthm}, 
\begin{equation}
\label{eqn:ergcgt}
\lim \limits_{n \, \to \, \infty}\dfrac{1}{n} \displaystyle \sum_{j=0}^{n-1} \ \displaystyle \sideset{}{'}\sum_{w \, \in \, (F^{j})^{\dagger} (z)} \displaystyle \frac{\phi(w)}{d^j} = \int \limits_{X} \phi \, \mathrm{d}\mu, \ \text{for} \ \mu - \text{a.e} \ \ z \in X.
\end{equation}
Applying the proof strategy of Theorem \ref{thm:dsmix}, to the Equation \ref{eqn:ergcgt} completes the proof of this implication. 

\textbf{3 $\implies$ 2} \ Let $\phi,\psi \in L^2(\mu)$. Thus, clearly, $\psi \in L^1(\mu). $ By density of $\mathcal{C}(X,\mathbb{R})$ in $L^2$, given any $\epsilon > 0$, there exists $\eta \in \mathcal{C}(X,\mathbb{R})$ such that $\lVert \phi - \eta \rVert_2 < \epsilon.$ Now consider
\begin{eqnarray}
\label{eqn:csineq}
\bigg|\frac{1}{n} \sum \limits_{j=0}^{n-1} I_{j}^{F} (\phi, \psi) - I(\phi) I(\psi)\bigg| & = & \bigg|\frac{1}{n} \sum \limits_{j=0}^{n-1} I_{j}^{F} (\phi, \psi) - \frac{1}{n} \sum \limits_{j=0}^{n-1} I_{j}^{F} (\eta, \psi) + \frac{1}{n} \sum \limits_{j=0}^{n-1} I_{j}^{F} (\eta, \psi) \nonumber \\
& & \hspace{3cm} - I(\eta) I(\psi) + I(\eta) I(\psi) - I(\phi) I(\psi)\bigg| \nonumber \\
& \leq & \bigg|\frac{1}{n} \sum \limits_{j=0}^{n-1} I_{j}^{F} (\phi-\eta, \psi)\bigg|+\bigg|\frac{1}{n} \sum \limits_{j=0}^{n-1} I_{j}^{F} (\eta, \psi) - I(\eta) I(\psi) \bigg| \nonumber \\
& &  \hspace{5.8cm} + \bigg|I(\psi) I(\eta - \phi)\bigg| 
\end{eqnarray}
The terms in the Equation \ref{eqn:csineq} can be estimated by using the Cauchy Schwartz inequality, Proposition \ref{prop:contra}, the hypothesis of this part of implication and the estimate $\lVert \phi - \eta \rVert_2 < \epsilon.$ Thus, we choose $n_0 \in \mathbb{N}$ such that for all $n \ge n_0,$ $\bigg|\dfrac{1}{n} \displaystyle \sum \limits_{j=0}^{n-1} I_{j}^{F} (\phi, \psi) - I(\phi) I(\psi)\bigg| < 2\epsilon||\psi||_2 + \epsilon$, which completes the proof of this part of the theorem.

\textbf{2 $\implies$ 1} \ Let $B$ be an almost invariant set with respect to $F$ and $\mu.$ Choosing $\phi := \chi_{\raisebox{-.7ex}{$\scriptstyle B$}}$ and $\psi := \chi_{\raisebox{-.7ex}{$\scriptstyle B^c$}}$, we see that, 
\begin{equation}
\label{eqn:intchi}
\lim \limits_{n \, \to \, \infty} \dfrac{1}{n} \displaystyle \sum \limits_{j=0}^{n-1} I_{j}^{F} (\chi_{\raisebox{-.7ex}{$\scriptstyle B$}}, \chi_{\raisebox{-.7ex}{$\scriptstyle B^c$}}) = I(\chi_{\raisebox{-.7ex}{$\scriptstyle B$}}) I(\chi_{\raisebox{-.7ex}{$\scriptstyle B^c$}}).
\end{equation}
As $B$ and $B^{c}$ are almost invariant with respect to $F^j$ and $\mu$, $\dfrac{1}{d^{j}} \displaystyle \sideset{}{'}  \sum\limits_{w \, \in \, \left(F^j \right)^{\dagger}(z)} \chi_{\raisebox{-.7ex}{$\scriptstyle B$}}(w)=\chi_{\raisebox{-.7ex}{$\scriptstyle B$}}(z) \ \text{for} \ \mu - \text{a.e} \\ z \in X$, for every $j \in \mathbb{N}$. Applying this observation in Equation \ref{eqn:intchi}, we get 
\[
\lim \limits_{n \, \to \, \infty} \dfrac{1}{n} \displaystyle \sum \limits_{j \, = \, 0}^{n - 1} \displaystyle \int \limits_{X} \chi_{\raisebox{-.7ex}{$\scriptstyle B$}} \chi_{\raisebox{-.7ex}{$\scriptstyle B^c$}} \, \mathrm{d}\mu = \displaystyle \int \limits_{X} \chi_{\raisebox{-.7ex}{$\scriptstyle B$}} \, \mathrm{d}\mu \displaystyle \int \limits_{X} \chi_{\raisebox{-.7ex}{$\scriptstyle B^c$}} \, \mathrm{d}\mu.
\]
This results in $\mu (B) = 0$ or $\mu (B^c) = 0$, which proves that $F$ is ergodic with respect to $\mu$.
\end{proof}

Upon comparing the third statement in Theorem \ref{thm:ergequiv} and the Definition \ref{defn:mixing}, one can observe that ergodicity of correspondences can be interpreted as ``mixing on the average". Next we state a theorem, regarding similar equivalence conditions for when a correspondence is mixing and weakly mixing respectively.
\begin{theorem}
\label{thm:mixerqui}
    Let $F$ be a holomorphic correspondence on $X$ with $\mu$ being an $F^{*}$ invariant measure. Let $d$ be the topological degree of $F$. Then the following are equivalent.
    \begin{enumerate}
        \item $F$ is mixing (resp. weakly mixing) with respect to $\mu$.
        \item For any $\phi,\psi \in L^2(\mu)$,\\
$\lim \limits_{n \, \to \, \infty} I_{n}^{F} (\phi, \psi) = I(\phi) I(\psi)$, $\left(\text{resp}. \lim \limits_{n \, \to \, \infty}\dfrac{1}{n} \displaystyle \sum \limits_{j=0}^{n-1} \big|I_{j}^{F} (\phi, \psi) - I(\phi) I(\psi)\big| = 0 \right)$.
\end{enumerate}
\end{theorem}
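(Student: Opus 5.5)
We prove both directions by approximation, along the lines of the implication \textbf{3 $\implies$ 2} in Theorem \ref{thm:ergequiv}. The two facts we need are the bilinearity of $I_n^F$ and the fact that $U_F^n$ is an $L^2$ contraction (Proposition \ref{prop:contra} applied to $F^n$, or iterated).

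\textbf{1 $\implies$ 2.} Fix $\phi,\psi \in L^2(\mu)$ and $\epsilon>0$. By density, choose $\eta \in \mathcal{C}(X,\mathbb{R})$ with $\lVert \phi-\eta\rVert_2<\epsilon$. Since $\psi \in L^2(\mu)\subseteq L^1(\mu)$ ($\mu$ is a probability measure), the hypothesis applies to the pair $(\eta,\psi)$. Split
\[
I_n^F(\phi,\psi)-I(\phi)I(\psi) = I_n^F(\phi-\eta,\psi) + \bigl(I_n^F(\eta,\psi)-I(\eta)I(\psi)\bigr) + I(\psi)I(\eta-\phi).
\]
By Cauchy--Schwarz and the contraction property, the first term is bounded by $\lVert U_F^n(\phi-\eta)\rVert_2\lVert\psi\rVert_2 \le \epsilon\lVert\psi\rVert_2$, uniformly in $n$. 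The third term is at most $\lVert\psi\rVert_1\lVert\phi-\eta\rVert_1\le \epsilon\lVert\psi\rVert_2$.

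In the mixing case, the middle term tends to $0$. Hence
\[
\limsup_{n\to\infty} \bigl|I_n^F(\phi,\psi)-I(\phi)I(\psi)\bigr| \le 2\epsilon\lVert\psi\rVert_2,
\]
and letting $\epsilon\to 0$ gives the claim. In the weakly mixing case, take absolute values in the splitting and average over $j=0,\dots,n-1$. The uniform bounds on the first and third terms survive averaging, while the Cesàro mean of the absolute middle terms tends to $0$ by hypothesis. This yields the same $\limsup$ estimate.

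\textbf{2 $\implies$ 1.} Here we must pass from $\psi\in L^2$ to $\psi\in L^1$, with $\phi\in\mathcal{C}(X,\mathbb{R})\subseteq L^2(\mu)$ fixed. I expect this to be the main obstacle, because $\psi$ is no longer square integrable and Cauchy--Schwarz is unavailable.

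The point is that $\phi$ is continuous on compact $X$, hence bounded by $\lVert\phi\rVert_\infty$. Each value $U_F^n(\phi)(z)$ is an average of $d^n$ values of $\phi$, counted with multiplicity, so $\lVert U_F^n\phi\rVert_\infty\le\lVert\phi\rVert_\infty$. Now approximate $\psi\in L^1$ by $\zeta\in\mathcal{C}(X,\mathbb{R})\subseteq L^2$ with $\lVert\psi-\zeta\rVert_1<\epsilon$. Then
\[
\bigl|I_n^F(\phi,\psi-\zeta)\bigr| \le \lVert\phi\rVert_\infty\,\epsilon
\quad\text{and}\quad
\bigl|I(\phi)I(\psi-\zeta)\bigr|\le \lVert\phi\rVert_\infty\,\epsilon,
\]
both uniformly in $n$. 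The same three-term splitting, now in the second argument, then gives the limit, or the Cesàro limit in the weakly mixing case, up to an error of $2\epsilon\lVert\phi\rVert_\infty$. Letting $\epsilon\to 0$ finishes the proof.
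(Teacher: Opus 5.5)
Your proposal is correct and takes the same route as the paper. In both directions you approximate one argument (a continuous $\eta$ for $\phi\in L^2$, then a continuous $\zeta$ for $\psi\in L^1$) and use the three-term splitting from the proof of 3 $\implies$ 2 in Theorem \ref{thm:ergequiv}. In 2 $\implies$ 1 you also state explicitly the bound $\lVert U_F^n\phi\rVert_\infty\le\lVert\phi\rVert_\infty$, i.e.\ H\"older with exponents $\infty$ and $1$ rather than Cauchy--Schwarz; the paper's ``core strategy'' remark relies on this estimate but does not state it.
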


\begin{proof}
The proof of both the implications is similar to the strategy used in proving parts of Theorem \ref{thm:ergequiv}. Thus, we just mention a few comments about the proof without writing it fully.

\textbf{1 $\implies$ 2} \ The proof of this part works \emph{mutatis mutandis} the proof of \textbf{3 $\implies$ 2} in Theorem \ref{thm:ergequiv}.

\textbf{2 $\implies$ 1} \ In this case, we use the fact that $\mathcal{C} (X, \mathbb{R}) \subset L^{2} (\mu)$ and that any $L^1$ function can be approximated by a function in $L^2$. These estimates and the core strategy from the proof of
\textbf{3 $\implies$ 2} of Theorem \ref{thm:ergequiv} are enough to complete the proof.
\end{proof}

We now interpret Theorem \ref{thm:ergequiv} using measures. As we had seen in Section \ref{sec:three}, while motivating our definition of mixing, the sequences $I_n (\phi, \psi)$ and $\mu(F^{n}(A) \cap B)$ need not behave well with respect to each other in our dynamical system even as they connect well with each other in the case of maps. One can observe a similar pattern here with regards to the following theorem when it is compared with Theorem \ref{thm:ergequiv}. To be more precise, for a continuous map $T$ defined on a compact metric space $K$, ergodicity of $T$ with respect to an invariant Borel probability measure $\nu$ is equivalent to the statement that for any Borel sets $A_1, A_2 \subset K$, we have 
\begin{align}
\label{eqn:silva5.1.5}
\lim \limits_{n \to \infty} \dfrac{1}{n} \displaystyle \sum \limits_{j=0}^{n-1} \nu ( T^{-j} (A_1) \cap A_2) = \nu (A_1) \nu (A_2).
\end{align}
 
One can refer to a standard text in ergodic theory, say \cite{walters_et} or \cite{silva_invitation} for a proof of the same. The limit (if it exists) of the analogue of the above sequence, in the present scenario is described in terms of an inequality. Although they don't have any direct link to the main theorems of this work, Propositions \ref{thm:ineqeg} and \ref{prop:ergequal} are stated and proved in this paper to serve as a motivation to compare the situation in case of map and that of holomorphic correspondences.

\begin{proposition}
\label{thm:ineqeg}
Let $F$ be a holomorphic correspondence on $X$ with topological degree $d$ and let $\mu$ be an $F^{*}$ invariant measure. Suppose $F$ is ergodic with respect to $\mu$. Let $A,B \in \mathscr{B} (X)$ be such that $\lim \limits_{n \to \infty} \dfrac{1}{n} \displaystyle \sum \limits_{j=0}^{n-1} \mu ( F^j(A) \cap B)$ exists, then $\lim \limits_{n \to \infty} \dfrac{1}{n} \displaystyle \sum \limits_{j=0}^{n-1} \mu ( F^j(A) \cap B) \geq \mu (A) \mu (B)$.
\end{proposition}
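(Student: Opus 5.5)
The plan is to compare the measure $\mu(F^j(A)\cap B)$ with the integral $I_j^F(\chi_A,\chi_B)$. Then we apply the characterisation of ergodicity in Theorem \ref{thm:ergequiv}, statement 2, to the pair of $L^2$ functions $\chi_A,\chi_B$.

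\textbf{Step 1: a pointwise bound for each $j$.} Fix $j \ge 0$ and $z \in X$. If some $w \in (F^j)^{\dagger}(z)$ lies in $A$, then $z \in F^j(w) \subseteq F^j(A)$. This uses the standard fact that $w \in (F^j)^\dagger(z)$ iff $z \in F^j(w)$, which follows from the definitions via $|\Gamma^{\circ j}|$; for $j=0$ both sides reduce to the identity. Hence, if $z \notin F^j(A)$, the sum $\sideset{}{'}\sum_{w \in (F^j)^\dagger(z)} \chi_A(w)$ vanishes. In all cases that sum has at most $d^j$ terms counted with multiplicity, for generic $z$. So we get
\[ (U_F^j \chi_A)(z) \le \chi_{F^j(A)}(z) \]
for $\mu$-a.e.\ $z$, which is the inequality already noted in Section \ref{sec:three}. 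One caveat: the number of preimages of non-generic points could exceed $d^j$. I expect this to be handled by the fact that the non-generic set is a proper analytic subset. Alternatively, the bound can be taken only in the integrated form, since $F^j(A)$ is Borel.

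\textbf{Step 2: integrate against $\chi_B$.} Multiplying by $\chi_B$ and integrating gives
\[ I_j^F(\chi_A,\chi_B) \le \mu(F^j(A)\cap B) \]
for every $j$. Averaging over $0 \le j \le n-1$ then yields
\[ \frac1n\sum_{j=0}^{n-1} I_j^F(\chi_A,\chi_B) \le \frac1n\sum_{j=0}^{n-1}\mu(F^j(A)\cap B). \]

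\textbf{Step 3: pass to the limit.} Since $F$ is ergodic with respect to $\mu$ and $\chi_A,\chi_B \in L^2(\mu)$, Theorem \ref{thm:ergequiv} shows that the left-hand average converges to $I(\chi_A)I(\chi_B)=\mu(A)\mu(B)$. By hypothesis the right-hand average converges. Limits preserve non-strict inequalities, so the limit on the right is at least $\mu(A)\mu(B)$.

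The only delicate point is Step 1, namely justifying $U_F^j\chi_A \le \chi_{F^j(A)}$ almost everywhere. This requires the duality between $F^j$ and $(F^j)^\dagger$ and the control of the preimage count by $d^j$ outside a $\mu$-null set. The rest is formal.
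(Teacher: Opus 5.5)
Your argument is correct and is essentially the paper's. The paper obtains $\frac1n\sum_{j<n} I_j^F(\chi_A,\chi_B)\to\mu(A)\mu(B)$ directly from Theorem \ref{thm:ergthm} plus dominated convergence (the same argument that proves $1\Rightarrow 3$ in Theorem \ref{thm:ergequiv}, which you cite instead), and then integrates the same bound $U_F^j\chi_A\le\chi_{F^j(A)}$ against $\chi_B$. On your caveat, however, the right justification is not that non-generic points form a proper analytic set, since an ergodic $F^{*}$ invariant measure can live there (e.g.\ $\delta_0$ for $z\mapsto z^2$); rather, counted with multiplicity, $(F^j)^\dagger(z)$ has exactly $d^j$ points for every $z$, so $U_F^j$ fixes the constant function $1$ and your bound holds everywhere.
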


\begin{proof}
Suppose $F$ is ergodic with respect to $\mu$ and let $A, B \in \mathscr{B} (X)$. As $\chi_{\raisebox{-.7ex}{$\scriptstyle A$}}$ is integrable, by Theorem \ref{thm:ergthm} we have $\lim \limits_{n \to \infty} \dfrac{1}{n} \displaystyle \sum \limits_{j=0}^ {n-1} \displaystyle \sum \limits_{ w \, \in \, (F^j)^{\dagger} (z) } \dfrac{\chi_{\raisebox{-.7ex}{$\scriptstyle A$}} (w)}{d^j} = \mu(A)$ for $\mu - $ a.e  $z\in X$.
Multiplying both sides of the above equation by $\chi_{\raisebox{-.7ex}{$\scriptstyle B$}} (z)$ results in $\lim \limits_{n \to \infty} \dfrac{1}{n} \displaystyle \sum_{j=0}^ {n-1} \displaystyle \sum_{ w \in (F^j)^\dagger(z) } \dfrac{\chi_{\raisebox{-.7ex}{$\scriptstyle A$}} (w)}{d^j} \chi_{\raisebox{-.7ex}{$\scriptstyle B$}} (z) = \mu(A) \chi_{\raisebox{-.7ex}{$\scriptstyle B$}} (z), \ \text{for} \ \mu -  \text{a.e} \ z \in X.$

Now, for any $n>0$, $ \displaystyle \dfrac{1}{n}\sum_{j=0}^ {n-1} \sum_{ w \in (F^j)^\dagger(z) } \dfrac{\chi_{\raisebox{-.7ex}{$\scriptstyle A$}}(w)}{d^j} \chi_{\raisebox{-.7ex}{$\scriptstyle B$}} (z)$  is bounded by 1. Making use of the dominated convergence theorem, we get

\begin{eqnarray*}
    \lim_{n \to \infty} \bigintsss \limits_{X} \frac{1}{n}\sum_{j=0}^ {n-1} \sum_{ w \in (F^j)^\dagger(z) } \frac{\chi_{\raisebox{-.7ex}{$\scriptstyle A$}} (w)}{d^j} \chi_{\raisebox{-.7ex}{$\scriptstyle B$}}(z) \,\mathrm{d}\mu(z) & = & \bigintsss \limits_{X} \lim_{n \to \infty}  \frac{1}{n}\sum_{j=0}^ {n-1} \sum_{ w \in (F^j)^\dagger(z) } \frac{\chi_{\raisebox{-.7ex}{$\scriptstyle A$}}(w)}{d^j} \chi_{\raisebox{-.7ex}{$\scriptstyle B$}} (z) \, \mathrm{d}\mu(z)\\
& = & \mu (A)\mu (B).
\end{eqnarray*}
Now,
\begin{eqnarray*}
    \bigintsss \limits_{X} \frac{1}{n}\sum_{j=0}^ {n-1} \sum_{ w \in (F^j)^\dagger(z)} \frac{\chi_{\raisebox{-.7ex}{$\scriptstyle A$}} (w)}{d^j} \chi_{\raisebox{-.7ex}{$\scriptstyle B$}} (z) \, \mathrm{d}\mu(z)
     & = & \frac{1}{n} \sum_{j=0}^{n-1} \bigintsss \limits_{X} \chi_{\raisebox{-.7ex}{$\scriptstyle B$}} (z) \sum_{ w \in (F^j)^\dagger(z)} \frac{\chi_{\raisebox{-.7ex}{$\scriptstyle A$}}(w)}{d^j} \ \mathrm{d}\mu(z)\\
     &\leq & \frac{1}{n} \sum_{j=0}^{n-1} \int \limits_{X} \chi_{\raisebox{-.7ex}{$\scriptstyle B\cap (F^j) (A)$}} (z) \, \mathrm{d}\mu(z)\\
     & = &\frac{1}{n} \sum_{j=0}^{n-1} \mu(B\cap F^j (A)).
\end{eqnarray*}
Hence, the result holds. 
\end{proof}

Note that the Proposition \ref{thm:ineqeg} uses Theorem \ref{thm:ergthm} in its proof. So, the analogue of the sequence involved in the Equation \ref{eqn:silva5.1.5} involves forward iterates of a holomorphic correspondence $F$ unlike the backward iterates as in the case of maps. The reason for the same has to do with the pull back invariance being considered in the case of correspondences even as we work with push forward invariance in case of maps. This brings in a change in the sequence involved in the ergodic theorem, which is reflected in the following proposition as well. However, one may consider $F_{*}$ invariance of an appropriate $\mu$, as in use the concept of push forward of $\mu$ under $F$ to study the sequence in terms of backward images. For more details on this, one may consult Remark 1.3 in \cite{londhe_ergodicity}. 

We now mention an example in which we consider what is known as a rational semigroup. This concept was introduced and first extensively studied in the work \cite{hink_martin}, where the reader can find the definition of a rational semigroup, the Julia set associated to it and its properties. This example uses ideas mentioned in \cite{hink_martin} and \cite{Boyd}. The interested reader may refer to \cite{bs_semigrp} and \cite{bs_entropy} for more details on the connection between rational semigroups and holomorphic correspondences.

\begin{example}
\label{example}
Consider the finitely generated rational semigroup $S = \left \langle z^2, \frac{z^2}{2} \right \rangle$. Then, there exists $A, B \in \mathscr{B} (\widehat{\mathbb{C}})$ such that $\lim \limits_{n \to \infty} \dfrac{1}{n} \displaystyle \sum \limits_{j=0}^{n-1} \mu ( F^j(A) \cap B) > \mu (A) \mu (B).$
\end{example}

\begin{proof}
First, note that one can view $S$ as a holomorphic correspondence on $\widehat{\mathbb{C}}$. To do this, consider $P_1 (z,w) = w - z^2$ and $P_2 (z,w)= w - \frac{z^2}{2}$. Let $\Gamma_1$ and $\Gamma_2$ be the subvarieties of $\widehat{\mathbb{C}} \times \widehat{\mathbb{C}}$ defined by the polynomials $P_1$ and $P_2$ respectively. Then, $\Gamma = \Gamma_1 + \Gamma_2$ is the correspondence associated to $S$. 

The Julia set of $S$ to be denoted by $J(S)$ is given by the set $\{z\in\mathbb{C}: 1 \leq |z| \leq 2 \}$. Let $\mu$ denote the Dinh Sibony measure associated with the correspondence $\Gamma$. It can be observed that the support of $\mu$ is the set $J(S)$. From \cite{Boyd}, we know that, $\mu(B) = \dfrac{m(log(B))}{2\pi \, log 2}, \ \text{for any Borel set} \ B \subset J(S)$. Here, $m$ is the Lebesgue measure on $\mathbb{C}$. 

Suppose $A = \{z\in\mathbb{C}: |z| > \sqrt{2} \}$, then we have $A \subset F^j(A)$, for every $j\geq 0.$ Thus we get $\lim \limits_{n \to \infty} \dfrac{1}{n} \displaystyle \sum \limits_{j=0}^{n-1} \mu ( F^j(A) \cap A) = \mu(A)$. A simple calculation shows that $\mu (A) = \frac{1}{2}$ and hence $\mu(A)^{2} < \mu(A)$. To be more specific, for the above choice of $A$, we have $\lim \limits_{n \to \infty} \dfrac{1}{n} \displaystyle \sum \limits_{j=0}^{n-1} \mu ( F^j(A) \cap A) > \mu (A) \mu (A).$ 
\end{proof}

This example essentially shows that the inequality obtained in the conclusion of Proposition \ref{thm:ineqeg} can be strict - thereby resulting in a different scenario than the one observed in the case of maps. The above example also indicates that our definition of mixing of holomorphic correspondences (considered with $X$ being equal to $\widehat{\mathbb{C}}$) works for the case of rational semigroups as well. Essentially, the dynamics of holomorphic correspondences helps in studying some dynamical properties of rational semigroups. It must be noted that, a certain skew product map is another typical way to model the dynamics of a rational semigroup. In \cite{sumi_skew}, Sumi investigated this in a greater detail and also speaks of exactness of the skew product map with respect to an invariant measure that he constructs in that article. In the case of a measure preserving transformation, exactness is a stronger notion than that of mixing. Thus, Sumi's result implied that the skew product map corresponding to a rational semigroup is mixing with respect to an appropriate measure. 

Now, we proceed to proving that the limit we consider in Proposition \ref{thm:ineqeg} exists when the set $B$ is almost invariant with respect to $F$ and $\mu$. This gives a good number of cases where Proposition \ref{thm:ineqeg} can be applied.  
 
\begin{proposition}
\label{prop:aiislim}
    Let $F$ be a holomorphic correspondence on $X$ with topological degree $d$ and let $\mu$ be an $F^{*}$ invariant measure. Let $A,B \in \mathscr{B} (X)$, where $B$ is almost invariant with respect to $F$ and $\mu.$ Then,  $\lim \limits_{n \to \infty} \dfrac{1}{n} \displaystyle \sum \limits_{j=0}^{n-1} \mu ( F^j(A) \cap B)$ exists.
\end{proposition}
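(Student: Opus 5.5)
The plan is to show that the sequence $a_j := \mu(F^j(A) \cap B)$ is nondecreasing. It is bounded by $1$, so it then converges, and its Ces\`{a}ro averages converge to the same limit. All sets involved are Borel because, as recalled in Section \ref{sec:two}, $F^n$ and $(F^n)^{\dagger}$ map Borel sets to Borel sets.

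\textbf{Step 1: $F(B)$ is contained in $B$ up to a $\mu$-null set.} Since $B$ is almost invariant, so is $B^c$ (Lemma 3.1 in \cite{londhe_ergodicity}). Hence there is $C' \subseteq B^c$ with $\mu(B^c \setminus C') = 0$ and $F^{\dagger}(C') \subseteq B^c$. Suppose $w \in B$ and $z \in F(w) \cap C'$. Then $w \in F^{\dagger}(z) \subseteq F^{\dagger}(C') \subseteq B^c$, which is a contradiction. Therefore $F(B) \cap C' = \emptyset$, that is, $F(B) \setminus B \subseteq B^c \setminus C'$. Consequently, for every Borel $E \subseteq B$,
\[ \mu(F(E) \setminus B) \le \mu(F(B) \setminus B) = 0. \]

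\textbf{Step 2: monotonicity.} Put $E_j := F^j(A) \cap B$, which is a Borel subset of $B$. Then $F(E_j) \subseteq F^{j+1}(A)$, so Step 1 gives
\[ a_{j+1} = \mu(F^{j+1}(A) \cap B) \ \ge\ \mu(F(E_j) \cap B) \ =\ \mu(F(E_j)). \]
We now invoke the inequality $\mu(E) \le \mu(F(E))$, valid for every Borel $E$ and every $F^{*}$ invariant $\mu$ (Lemmas 5.5 and 5.6 in \cite{londhe_recurrence}, quoted in Section \ref{sec:three}). This yields $\mu(F(E_j)) \ge \mu(E_j) = a_j$, and hence $a_{j+1} \ge a_j$ for all $j \ge 0$.

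\textbf{Step 3: conclusion.} A bounded monotone sequence converges, say $a_j \to L$. Ces\`{a}ro means of a convergent sequence converge to the same limit, so $\frac{1}{n}\sum_{j=0}^{n-1} \mu(F^j(A) \cap B) \to L$.

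The only delicate point is Step 1. One must extract the correct inclusion from almost invariance of $B^c$, rather than of $B$, and track the null set $B^c \setminus C'$ so that intersecting with $B$ costs nothing. The rest is routine.
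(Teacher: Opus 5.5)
Your proof is correct. Like the paper, you show that $a_j=\mu(F^j(A)\cap B)$ is nondecreasing using the inequality $\mu(E)\le\mu(F(E))$ (Lemma 5.6 in \cite{londhe_recurrence}), but you localise to $B$ in a different way.

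The paper passes to the normalised restriction $\mu'=\frac{1}{\mu(B)}\mu|_B$. It quotes Corollary 5.4 of \cite{londhe_ergodicity} for the fact that $\mu'$ is again $F^{*}$ invariant, applies the inequality to $\mu'$ and the set $F^j(A)$, and reads off monotonicity from $a_j=\mu(B)\,\mu'(F^j(A))$.

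You keep $\mu$ itself. You derive directly from the almost invariance of $B^c$ that $\mu(F(B)\setminus B)=0$, and then apply the inequality to the sets $E_j=F^j(A)\cap B$.

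The paper's argument is shorter once that corollary is granted. Yours has three advantages:
\begin{enumerate}
\item It is more self-contained, needing only the basic inequality and the almost invariance of complements.
\item It isolates exactly the property of $B$ that matters, namely forward invariance modulo a $\mu$-null set.
\item It never divides by $\mu(B)$, so the degenerate case $\mu(B)=0$ needs no separate treatment. In that case $\mu'$ is undefined, and the paper passes over it silently.
\end{enumerate}
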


\begin{proof}
It is enough to prove that the sequence $(a_j)=\mu ( F^j(A) \cap B)$ converges. Since $(a_j)$ is bounded above by $\mu(B)$, showing it is an increasing sequence completes the proof.

Consider the measure $\mu':=\frac{1}{\mu(B)}\mu|_B$. Whenever $\mu$ is an $F^{*}$ invariant measure and $B$ is an almost invariant set with respect to $F$ and $\mu$, we have $\mu'$ is also $F^{*}$ invariant measure (one may refer the proof of Corollary 5.4 in \cite{londhe_ergodicity} for the same). Thus we have $\mu'(F^j(B)) \leq \mu'(F^{j+1}(B))$, for any $j \in \mathbb{N} \cup \{0\}$, thanks to Lemma 5.6 in \cite{londhe_recurrence}. Now by rewriting the sequence $(a_j)$ as $\mu(B).\mu'(F^j(A))$, it's easy to observe that $(a_j)$ is an increasing sequence. Hence, $\lim \limits_{n \to \infty} \dfrac{1}{n} \displaystyle \sum \limits_{j=0}^{n-1} \mu ( F^j(A) \cap B)$ exists.  
\end{proof}

Note that almost invariance of $B$ is just a sufficient condition for the existence of the limit. The Borel sets considered in Example \ref{example} shows that the limit may exists otherwise too.

\begin{proposition}
\label{prop:ergequal}
Let $F$ be a holomorphic correspondence on $X$ with topological degree $d$ and $\mu$  be an $F^{*}$ invariant Borel probability measure. If
\begin{equation}
\label{eqn:10}
\lim_{n \to \infty} \frac{1}{n}\sum_{j=0}^{n-1} \mu ( F^j(A) \cap C) = \mu (A) \mu (C)
\end{equation}
for all measurable sets $A,C$, then $F$ is ergodic with respect to $\mu$.
\end{proposition}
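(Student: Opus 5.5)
We argue directly from the definition of ergodicity, in the same spirit as the proof of \textbf{2 $\implies$ 1} in Theorem \ref{thm:ergequiv}, but now using the forward images $F^j(A)$ in place of the integrals $I_j^F$. Let $B \in \mathscr{B}(X)$ be almost invariant with respect to $F$ and $\mu$. We will show that $\mu(B)\mu(B^c) = 0$.

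The plan is to apply Equation \ref{eqn:10} with a well chosen pair of sets for which the left hand side is forced to vanish. By Lemma 3.1 of \cite{londhe_ergodicity}, $B^c$ is almost invariant as well, and both $B$ and $B^c$ are almost invariant with respect to every iterate $F^j$. So we take $A := (B^c)'$, the subset of $B^c$ of full measure in $B^c$ with $F^{\dagger}((B^c)') \subseteq B^c$.

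The main step is to pass from a statement about $F^{\dagger}$ to a statement about forward images. We would like $F^j(A) \cap B$ to be null, that is, $\mu(F^j(A)\cap B) = 0$ for each $j$. The naive inclusion $F(A)\subseteq B^c$ does not follow from $F^{\dagger}(A)\subseteq B^c$, so we instead run the argument on the other set.

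Set $C := B'$, where $B' \subseteq B$, $\mu(B') = \mu(B)$ and $F^{\dagger}(B') \subseteq B$. Using almost invariance with respect to $F^j$, we get a set $B'_j$ of full measure in $B$ with $(F^j)^{\dagger}(B'_j) \subseteq B$. Take $C = \bigcap_j B'_j$, which is still of full measure in $B$, and $A = B^c$.

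Now $z \in F^j(B^c)\cap C$ would mean there is $w \in B^c$ with $z \in F^j(w)$, that is, $w \in (F^j)^{\dagger}(z) \subseteq B$. This is a contradiction. Hence $F^j(B^c)\cap C = \emptyset$ for every $j \geq 0$ (the case $j=0$ is trivial, since $C \subseteq B$).

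Plugging this into Equation \ref{eqn:10}, the left hand side is $0$. So $\mu(B^c)\mu(C) = \mu(B^c)\mu(B) = 0$, hence $\mu(B)\in\{0,1\}$, and $F$ is ergodic with respect to $\mu$.

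The delicate step is the bookkeeping with the sets $B'_j$. One must make sure that the definition of almost invariance for $F^j$, which comes from Lemma 3.1 of \cite{londhe_ergodicity}, provides a subset with $(F^j)^{\dagger}(B'_j)\subseteq B$. One must also make sure that the countable intersection keeps full measure. Both are standard: a countable intersection of sets each of full measure in $B$ is again of full measure in $B$. Measurability of $F^j(B^c)$ is guaranteed by the Borel property recalled in Section \ref{sec:two}.
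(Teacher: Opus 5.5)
Your argument is correct and is essentially the paper's proof: the paper also takes $C = B_\infty = \bigcap_i B_i'$, built from the almost invariance of $B$ under each $F^i$, but pairs it with $A = B$, deducing $B_\infty \subseteq F^j(B)$ so that Equation \ref{eqn:10} reads $\mu(B) = \mu(B)^2$. Your complementary choice $A = B^c$, which makes $F^j(B^c) \cap C = \emptyset$, is an equally valid and slightly cleaner variant, since it avoids the implicit use of $(F^j)^{\dagger}(z) \neq \emptyset$ that the paper needs for the inclusion $B_\infty \subseteq F^j(B)$.
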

\begin{proof}
Let $B$ be an almost invariant set with respect to $F$ and $\mu$. Then we have $B$ is almost invariant with respect to $F^i \ \text{and} \ \mu,   \ \text{for all} \ i \in \mathbb{N}$. Therefore, by Londhe's construction (see Lemma 3.1 in \cite{londhe_ergodicity}), we have a decreasing sequence of Borel sets, $B_i' \subset B$ such that $(F^i)^{\dagger}(B_{i}')  \subset  B$ and $\mu(B_i')=\mu(B)$ for all $i \in \mathbb{N}$. Consider the set, $ B_\infty := \bigcap \limits_{i=0}^{\infty} B_i^{'}$, where we set $B_0^{'}= B$. Now, fix $n \in \mathbb{N}$. For $0 \leq j \leq n-1$ we have, $(F^j)^{\dagger}(B_\infty)  \subset  (F^j)^{\dagger}(B_{j}')  \subset  B$ and hence $B_\infty \subseteq F^j(B)$. Now, substituting  $A = B$ and $C = B_\infty$ in Equation \ref{eqn:10}  gives $\mu(B) = \mu(B)^2$ which implies $\mu(B) = 0 \text{ or } \mu(B)= 1$. Hence, $F$ is ergodic with respect to $\mu$.
\end{proof}

We conclude this section by looking at the connection between the two different types of mixing and ergodicity. The following Lemma is a basic result (one may refer Lemma 6.1.1 in \cite{silva_invitation}) that will straightaway exhibit the connection that we are looking for.

\begin{lemma}
\label{lem:cesaro}
    Let $\{a_n\}$ be a bounded sequence. Then 
    \begin{enumerate}
        \item 
        $\lim \limits_{n \, \to \, \infty} a_n = a \implies \lim \limits_{n \, \to \, \infty} \dfrac{1}{n} \displaystyle \sum \limits_{j=0}^{n-1} |a_j - a| = 0.$
        \item
        $\lim \limits_{n \, \to \, \infty} \dfrac{1}{n} \displaystyle \sum \limits_{j=0}^{n-1} |a_j - a| = 0 \implies \lim \limits_{n \, \to \, \infty} \dfrac{1}{n} \displaystyle \sum \limits_{j=0}^{n-1} a_j = a.$
    \end{enumerate} 
\end{lemma}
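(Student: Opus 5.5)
For part 1, I would argue with an $\epsilon$--$N$ estimate followed by splitting the Ces\`aro sum. Let $M$ bound $|a_n|$, so that $|a_j - a| \le 2M$ for every $j$; note that $|a| \le M$ because $a$ is a limit of terms bounded by $M$. Fix $\epsilon > 0$ and choose $N$ such that $|a_j - a| < \epsilon$ for all $j \ge N$. For $n > N$, split $\frac{1}{n}\sum_{j=0}^{n-1}|a_j - a|$ into the first $N$ terms and the remaining $n-N$ terms. The first part is at most $2MN/n$, and the second is at most $\frac{n-N}{n}\epsilon \le \epsilon$. Letting $n \to \infty$ gives a $\limsup$ of at most $\epsilon$. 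Since $\epsilon$ is arbitrary, the limit is $0$.

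For part 2, I would use the triangle inequality:
\[
\left|\frac{1}{n}\sum_{j=0}^{n-1} a_j - a\right| = \left|\frac{1}{n}\sum_{j=0}^{n-1} (a_j - a)\right| \le \frac{1}{n}\sum_{j=0}^{n-1} |a_j - a|.
\]
The right-hand side tends to $0$ by hypothesis, so the Ces\`aro means converge to $a$. Boundedness is not actually needed in this part; it matters only in part 1, where it controls the finitely many initial terms.

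I do not expect a genuine obstacle. The one point needing care is in part 1: the bound $2M$ must apply uniformly to the first $N$ terms, and this is exactly what the boundedness hypothesis supplies.

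For the paper's purposes, these two parts give mixing $\Rightarrow$ weakly mixing $\Rightarrow$ the third condition of Theorem \ref{thm:ergequiv}. To apply them, take $a_j = I_j^F(\phi,\psi)$. This sequence is bounded by $\|\phi\|_\infty \|\psi\|_1$, since $|U_F^j\phi| \le \|\phi\|_\infty$ pointwise.
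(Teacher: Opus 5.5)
Your proof is correct and is the standard argument. The paper gives no proof of its own and only cites Lemma 6.1.1 of Silva's book, where this same splitting of the Ces\`aro sum (part 1) and triangle inequality (part 2) appear; one small aside is that boundedness is not genuinely needed in part 1 either, since a convergent sequence is automatically bounded and only finitely many initial terms need controlling.
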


\begin{theorem}
\label{thm:equiv}
    Let $F$ be a holomorphic correspondence on $X$ with topological degree $d$, and let $\mu \in \mathcal{M} (X)$ be an $F^{*}$ invariant measure. Then, with respect to the measure $\mu$,
\begin{center}
    $F \ \text{is mixing}   \implies F \ \text{is weakly mixing} \implies F \ \text{is ergodic}$ 
\end{center}
\end{theorem}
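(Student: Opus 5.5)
The plan is to apply Lemma \ref{lem:cesaro} to the scalar sequence $a_j := I_j^F(\phi,\psi)$ with $a := I(\phi)I(\psi)$. The resulting Cesàro statements are then matched against Definition \ref{defn:mixing} and Theorem \ref{thm:ergequiv}. Throughout, fix $\phi \in \mathcal{C}(X,\mathbb{R})$ and $\psi \in L^1(\mu)$, which is exactly the class of test functions appearing in Definition \ref{defn:mixing} and in item 3 of Theorem \ref{thm:ergequiv}.

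The first step is to check that $\{a_j\}$ is bounded, as Lemma \ref{lem:cesaro} requires. Since $\phi$ is continuous on the compact manifold $X$, we have $|\phi| \le \|\phi\|_\infty$. Every $w \in (F^j)^{\dagger}(z)$, counted with multiplicity, contributes at most $\|\phi\|_\infty$, and there are $d^j$ such points for generic $z$ (at most $d^j$ in general, after accounting for multiplicities). Hence $|U_F^j(\phi)(z)| \le \|\phi\|_\infty$ for $\mu$-a.e. $z$, and so
\[
|a_j| \le \int_X |U_F^j(\phi)|\,|\psi| \, \mathrm{d}\mu \le \|\phi\|_\infty \|\psi\|_1
\]
uniformly in $j$. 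If one is worried about the count of preimages at non-generic points, I would instead pass through $L^\infty$: the H\"older estimate in the proof of Proposition \ref{prop:contra} works verbatim for $q = \infty$, or one can note that $U_F$ maps bounded functions to functions bounded by the same constant, since the total multiplicity of $F^{\dagger}(z)$ is $d$ for every $z$. I expect this bound to be the only point needing any care.

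For mixing $\Rightarrow$ weakly mixing: by the definition of mixing, $a_j \to a$. Part 1 of Lemma \ref{lem:cesaro} then gives $\frac{1}{n}\sum_{j=0}^{n-1}|a_j - a| \to 0$. Since $\phi$ and $\psi$ were arbitrary in the prescribed classes, this is precisely the definition of weak mixing.

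For weakly mixing $\Rightarrow$ ergodic: weak mixing gives $\frac{1}{n}\sum_{j=0}^{n-1}|a_j - a| \to 0$. Part 2 of Lemma \ref{lem:cesaro} then yields $\frac{1}{n}\sum_{j=0}^{n-1} I_j^F(\phi,\psi) \to I(\phi)I(\psi)$ for all $\phi \in \mathcal{C}(X,\mathbb{R})$ and $\psi \in L^1(\mu)$. This is condition 3 of Theorem \ref{thm:ergequiv}, so the implication $3 \Rightarrow 1$ of that theorem shows that $F$ is ergodic with respect to $\mu$.
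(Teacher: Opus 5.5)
Your proposal is correct and follows the paper's proof exactly. You apply part 1 of Lemma~\ref{lem:cesaro} to $a_j = I_j^F(\phi,\psi)$ to get mixing $\Rightarrow$ weakly mixing, and part 2 together with condition 3 of Theorem~\ref{thm:ergequiv} to get weakly mixing $\Rightarrow$ ergodic. Your extra boundedness check is valid but not needed: a convergent sequence is automatically bounded, and part 2 follows directly from $\bigl|\frac{1}{n}\sum_{j=0}^{n-1} a_j - a\bigr| \le \frac{1}{n}\sum_{j=0}^{n-1}|a_j - a|$.
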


\begin{proof}
Let $F$ be mixing with respect to $\mu$. For $\phi \in \mathcal{C} (X, \mathbb{R})$ and $\psi \in L^{1} (\mu)$, consider the sequence $a_n = I_{n}^{F} (\phi, \psi)$. By definition, $a_n$ converges to $I(\phi) I(\psi)$. Now, by the first statement of Lemma \ref{lem:cesaro}, $F$ is weakly mixing with respect to $\mu$. Similarly, the second statement in Lemma \ref{lem:cesaro} and Theorem \ref{thm:ergequiv} proves that if $F$ is weakly mixing with respect to $\mu$ then it is ergodic with respect to $\mu$.   
\end{proof}

\section{Product of holomorphic correspondences}
\label{sec:five}

In this section, we introduce the concept of product of two holomorphic correspondences. Before that we take a brief detour to the case of continuous maps in order to understand the motivation for studying product of two holomorphic correspondences in this work. Suppose $T_1: K_1 \to K_1$ and $T_2: K_2 \to K_2$ are continuous functions, where the spaces $K_1$ and $K_2$ are compact. Then, the product $T_1 \times T_2$ is defined as $T_1 \times T_2: K_1 \times K_2 \to K_1 \times K_2$ given by $(T_1 \times T_2) (x_1, x_2) = (T_1(x_1), T_2(x_2))$. The details regarding the dynamics of this map in relation to that of $T_1$ and $T_2$ can be found in a standard ergodic theory book, say for example, \cite{walters_et}. Our main goal in this section is to develop an analogous theory in the study of holomorphic correspondences. We state the definition of product of two holomorphic correspondences - which shall readily indicate that the resulting expression of the product is again a holomorphic correspondence. We also analyse its dynamics and measure theoretic aspects. Product of holomorphic correspondences has an interesting application in characterising weakly mixing of holomorphic correspondences - as we shall see in the next section. It must be noted that, the product of holomorphic correspondences were briefly considered in the recent preprint \cite{luoMarco2026} as well. We were able to improve our presentation of product of two holomorphic correspondences in this paper after going through the above preprint and having a correspondence with its second named author.

Let $X_1$ and $X_2$ be compact connected complex manifolds. Let $F_1$ and $F_2$ be holomorphic correspondences on $X_1$ and $X_2$ respectively. Let us denote their respective subvariety representations by $\Gamma^1=\displaystyle \sum\limits_{1\leq i\leq N_1} m_i\Gamma_i^1$ and  $\Gamma^2=\displaystyle \sum\limits_{1\leq j\leq N_2} n_j\Gamma_j^2$. Consider the map $S: X_1 \times X_2 \times X_1 \times X_2 \longrightarrow X_1 \times X_1 \times X_2 \times X_2$ given by $S(z_1, z_2, z_3, z_4) = (z_1, z_3, z_2, z_4)$. Clearly, $S$ is a holomorphic function. Fix $1 \le i \le N_1$ and $1 \le j \le N_2$. From \cite{chirka}, we know that $\Gamma_i^{1} \times \Gamma_j^{2}$ is an irreducible complex subvariety of $X_1 \times X_1 \times X_2 \times X_2$ and consequently, $S^{-1} (\Gamma_i^{1} \times \Gamma_j^{2})$ is an irreducible complex subvariety of $X_1 \times X_2 \times X_1 \times X_2$. Now, define
\begin{equation}
\label{eqn:proddefn}
     \Gamma^{1} \times \Gamma^{2} := \sum \limits_{1\leq i\leq N_1} \, \sum \limits_{1\leq j\leq N_2} m_i n_j S^{-1} (\Gamma_i^{1} \times \Gamma_j^{2}).
\end{equation}
We call the expression in Equation \ref{eqn:proddefn} as the product of the holomorphic correspondences $\Gamma^{1}$ and $\Gamma^{2}$. From the discussion in the above paragraph, $\Gamma_1 \times \Gamma_2$ is a holomorphic correspondence on the compact connected complex manifold $X_1 \times X_2$, as it satisfies Definition \ref{defn:holcorresp}. Denoting the set valued map corresponding to $\Gamma_1 \times \Gamma_2$ by $F_1 \times F_2$, we see that, $(F_1 \times F_2) (z_1, z_2) = F_1 (z_1) \times F_2 (z_2)$. Now, let $d_1$ and $d_2$ be the topological degree of $F_1$ and $F_2$ respectively. Since, $(F_1\times F_2)^\dagger(w_1,w_2) = \left\{(z_1, z_2): z_1 \in F_1^\dagger(w_1) \ \text{and} \ z_2 \in F_2^\dagger(w_2) \right\},$ we see that $\#(F_1\times F_2)^\dagger(w_1,w_2) = (\# F_1^\dagger(w_1))(\# F_2^\dagger(w_2))$, for any generic point $(w_1,w_2)\in X_1 \times X_2.$ Therefore, topological degree of $F_1\times F_2 = d_1d_2$. We now state and prove a simple Proposition regarding the invariance of measures with respect to the product correspondence. 

\begin{proposition}
Let $F_1$ and $F_2$ be two holomorphic correspondences on compact connected complex manifolds $X_1$ and $X_2$ respectively and let $d_1\ \text{and} \ d_2$ be their respective topological degrees. Also, let $\mu_1 \in \mathcal{M}(X_1)$ and $\mu_2 \in \mathcal{M}(X_2)$ are such that $\mu_1$ is $F_{1}^{*}$ invariant and $\mu_2$ is $F_2^{*}$ invariant. Denote by $\mu$ the product measure $\mu_1 \times \mu_2$. Then we have
$((F_1\times F_2)^{n})^{*}\mu = d_1^n d_2^n \mu$, for any $n \in \mathbb{N}$. In particular, the measure $\mu$ is $(F_1 \times F_2)^{*}$ invariant.
\end{proposition}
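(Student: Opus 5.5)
The plan is to reduce to the case $n=1$ and then test the invariance identity on a convenient class of functions.

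\textbf{Reduction to $n=1$.} By the discussion after Definition \ref{defn:holcorresp}, $(F_1\times F_2)^n$ is the correspondence induced by $(\Gamma^1\times\Gamma^2)^{\circ n}$. I would check from the set-valued description that
\[
(F_1\times F_2)^n(z_1,z_2)=F_1^n(z_1)\times F_2^n(z_2),
\]
and similarly for the preimages, with multiplicities multiplying. This lets us identify $(F_1\times F_2)^n$ with $F_1^n\times F_2^n$ as set-valued maps with multiplicity, which is all the pull back formula uses. Since $\mu_i$ is $F_i^*$ invariant, iterating Equation \ref{eqn:inteq} (equivalently, using $\int U_{F_i}^n\phi\,\mathrm{d}\mu_i=\int\phi\,\mathrm{d}\mu_i$) shows that $(F_i^n)^*\mu_i=d_i^n\mu_i$. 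So it suffices to prove the case $n=1$ for arbitrary correspondences $F_1,F_2$ with invariant $\mu_1,\mu_2$.

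\textbf{Case $n=1$ on product test functions.} Take $\phi(w_1,w_2)=\phi_1(w_1)\phi_2(w_2)$ with $\phi_i\in\mathcal{C}(X_i,\mathbb{R})$. Using the description of $(F_1\times F_2)^\dagger(z_1,z_2)$ with multiplicities $m_in_j$, the inner sum factorises:
\[
\sideset{}{'}\sum_{w\in (F_1\times F_2)^\dagger(z_1,z_2)}\phi(w)=\Big(\sideset{}{'}\sum_{w_1\in F_1^\dagger(z_1)}\phi_1(w_1)\Big)\Big(\sideset{}{'}\sum_{w_2\in F_2^\dagger(z_2)}\phi_2(w_2)\Big).
\]
Integrating against $\mu_1\times\mu_2$ and applying Fubini, together with Equation \ref{eqn:inteq} for each factor, gives $d_1d_2\int\phi\,\mathrm{d}\mu$. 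By linearity, the identity then holds on the algebra of finite sums of such products. This algebra separates points and contains constants, so by Stone--Weierstrass it is uniformly dense in $\mathcal{C}(X_1\times X_2,\mathbb{R})$.

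\textbf{Extension to all continuous $\phi$.} Both sides of the identity are continuous in $\phi$ for the sup norm: the left side is bounded by $d_1d_2\|\phi\|_\infty$, because generic fibres have $d_1d_2$ points and the fibre cardinality is bounded everywhere. Hence the identity extends to all $\phi\in\mathcal{C}(X_1\times X_2,\mathbb{R})$, which gives $(F_1\times F_2)^*\mu=d_1d_2\mu$.

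\textbf{Main obstacle.} The delicate step is the multiplicity bookkeeping. One must justify that the fibre of $S^{-1}(\Gamma_i^1\times\Gamma_j^2)$ over $(z_1,z_2)$, counted with multiplicity, is exactly the product of the multiplicity-counted fibres of $\Gamma_i^1$ and $\Gamma_j^2$, and that composition commutes with products. The first try is to prove this at generic points, where all fibres are reduced, and then use that the non-generic set has $\mu$-measure zero. The measure-zero claim would follow since $\mu=\mu_1\times\mu_2$ and non-generic points lie in the product of proper analytic sets with one of the $X_i$; if that fails, I would fall back on the definition of multiplicity used in \cite{bs_p1}, where local intersection multiplicity of a product equals the product of multiplicities.
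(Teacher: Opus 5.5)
Your proposal is correct, but it is organised differently from the paper's proof. The paper does not reduce to $n=1$ and does not restrict to product test functions. For arbitrary $n$ and arbitrary $\phi\in\mathcal{C}(X_1\times X_2,\mathbb{R})$ it applies Fubini and writes the sum over $((F_1\times F_2)^n)^\dagger(z_1,z_2)$ as an iterated sum over $(F_2^n)^\dagger(z_2)$ and $(F_1^n)^\dagger(z_1)$. It then applies the $(F_1^n)^*$ invariance of $\mu_1$ to the slice $z_1\mapsto\phi(z_1,w_2)$ for each fixed $w_2$, swaps the integrals, and applies the $(F_2^n)^*$ invariance of $\mu_2$ to $w_2\mapsto\phi(z_1,w_2)$ for each fixed $z_1$.

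Slicing therefore does the work of your Stone--Weierstrass step and of the sup-norm continuity of the pull-back functional. In exchange, your computation only ever integrates functions of the form $f(z_1)g(z_2)$, so Fubini is immediate and no measurability of mixed slice sums needs checking. Your separate reduction to $n=1$ is also more than you need. Once $(F_1\times F_2)^*\mu=d_1d_2\mu$ is known, the fact recalled from \cite{londhe_ergodicity} at the start of the paper's proof (invariance under $G^*$ implies invariance under $(G^n)^*$), applied to $G=F_1\times F_2$, gives every $n$. This avoids identifying $(F_1\times F_2)^n$ with $F_1^n\times F_2^n$ at all.

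On your ``main obstacle'': go straight to the fallback, because the first try fails. The non-generic set of $F_1\times F_2$ is $(E_1\times X_2)\cup(X_1\times E_2)$ with $E_i$ proper analytic, but an arbitrary $F_i^*$ invariant measure may charge $E_i$. For the correspondence $\{w=z^2\}$ on $\widehat{\mathbb{C}}$, one has $F^\dagger(0)=\{0\}$ with multiplicity $2$. So $F^*\delta_0=2\delta_0$, meaning $\delta_0$ is $F^*$ invariant and is carried by a non-generic point.

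You therefore need the factorisation of multiplicity-counted fibres at every point. This follows from multiplicativity of local degrees of the finite map $\pi_2|_{\Gamma_i^1}\times\pi_2|_{\Gamma_j^2}$: count the preimages near $(w_1,w_2)$ of a nearby point $(z_1',z_2')$ whose two coordinates are both generic. The paper uses this factorisation pointwise without comment, so your proof is more careful on this point.

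The same fact shows that every multiplicity-counted fibre has exactly $d_1d_2$ points, which is the clean justification of your sup-norm bound. Alternatively, the pull-back functional is positive, so its norm equals its value at the constant $1$. Your product case already computes that value as $d_1d_2$.
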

\begin{proof}
Firstly, recall from \cite{londhe_ergodicity} that, if $F$ is a holomorphic correspondence on $X$ and $\nu$ is a $F^{*}$ invariant measure, then $\nu$ is $(F^n)^{*}$ invariant as well. Hence, with regards to our notations, $\mu_1$ and $\mu_2$ are $(F_1^{n})^{*}$ invariant and $(F_2^{n})^{*}$ invariant respectively.

We shall prove that, for $\phi \in \mathcal{C} (X_1 \times X_2, \mathbb{R}), \ \left \langle ((F_1\times F_2)^{n})^{*}\mu, \phi  \right \rangle = d_1^n d_2^n  \left \langle\mu, \phi \right \rangle.$ Note that,
\begin{equation*}
\left \langle ((F_1\times F_2)^{n})^{*}\mu, \phi  \right \rangle = \bigintsss \limits_{X_1\times X_2} \left( \displaystyle \sideset{}{'}  \sum\limits_{(w_1,w_2) \, \in \, ((F_1\times F_2)^{n})^{\dagger}(z_1,z_2)} \phi(w_1,w_2)\right) \, \mathrm{d}(\mu)
\end{equation*}
By Fubini's theorem we have,
\begin{eqnarray*}
    \left \langle ((F_1\times F_2)^{n})^*\mu, \phi  \right \rangle & = & \bigintsss \limits_{X_2} \bigintsss \limits_{X_1} \left( \displaystyle \sideset{}{'}  \sum\limits_{(w_1,w_2) \, \in \, ((F_1\times F_2)^{n})^{\dagger}(z_1,z_2)} \phi(w_1,w_2)\right) \, \mathrm{d}\mu_1(z_1) \mathrm{d}\mu_2(z_2)\\
    & = & \bigintsss \limits_{X_2} \displaystyle \sideset{}{'}  \sum\limits_{w_2 \, \in \, (F_2^{n})^{\dagger}(z_2)} \left( \bigintsss \limits_{X_1} \left( \displaystyle \sideset{}{'}  \sum\limits_{w_1 \, \in \, (F_1^{n})^{\dagger}(z_1)} \phi(w_1,w_2)\right) \, \mathrm{d}\mu_1(z_1)\right) \mathrm{d}\mu_2(z_2)\\
    & = & \bigintsss \limits_{X_2} \displaystyle \sideset{}{'}  \sum\limits_{w_2 \, \in \, (F_2^{n})^{\dagger}(z_2)} \left(d_1^n \bigintsss \limits_{X_1}  \phi(z_1,w_2) \, \mathrm{d}\mu_1(z_1)\right) \mathrm{d}\mu_2(z_2)\\
    & = & d_1^n \bigintsss \limits_{X_1} \left( \bigintsss \limits_{X_2}  \left( \displaystyle \sideset{}{'}  \sum\limits_{w_2 \, \in \, (F_2^{n})^{\dagger}(z_2)} \phi(z_1,w_2)\right) \, \mathrm{d}\mu_2(z_2) \right) \mathrm{d}\mu_1(z_1)\\
    & = & d_1^n d_2^n \int \limits_{X_1}  \int \limits_{X_2}   \phi(z_1,z_2) \, \mathrm{d}\mu_2(z_2) \, \mathrm{d}\mu_1(z_1) =  d_1^n d_2^n \int \limits_{X_1\times X_2} \phi \, \mathrm{d}\mu,
\end{eqnarray*}
which proves the result.
\end{proof}

For a holomorphic correspondence $F$ on $X$ one can consider the product $F \times F$. It is this product correspondence that we shall mostly consider in the forthcoming parts of our work. For the sake of brevity, henceforth, we will denote the product correspondence $F \times F$ on $X \times X$ by the symbol $F^{\times}$.

\section{An application of Product correspondences}
\label{sec:six}
In this section, we give an application of the product of holomorphic correspondences as we prove further equivalent notions of the concept of weakly mixing. An important question in this context is to see when ergodicity and weakly mixing are preserved by the product correspondence. It turns out that, in case of a map $T: K \to K$ (where $K$ is a compact metric space) $T$ is weakly mixing if and only if the product map $T \times T$ is ergodic. The main theorem of this section, namely Theorem \ref{thm:main} address the analogous question in the case of holomorphic correspondences. Before we state and prove it, we develop a few tools required for the same. We first state an useful result without proof. The proof of the same can be found in [\cite{walters_et}, Theorem 1.20]. 
\begin{proposition}
\label{prop:wal20}
Let $(a_n)_{n \in \mathbb{N}}$ be a bounded sequence of real numbers. Then the following are equivalent:
\begin{enumerate}
\item $\lim \limits_{n \, \to \, \infty} \dfrac{1}{n} \displaystyle \sum \limits_{j \, = \, 0}^{n-1} |a_{j}| = 0$.
\item There exists a density zero set $D \subset \mathbb{Z}^{+} \left( \text{that is to say} \
\dfrac{\# (D \cap \{0, 1, \cdots n-1\})}{n} \rightarrow 0 \right)$ such that $\lim \limits_{D \, \notni \, n \, \to \, \infty} a_n = 0$.  
\item $\lim \limits_{n \, \to \, \infty} \dfrac{1}{n} \displaystyle \sum \limits_{j \, = \, 0}^{n-1} |a_{j}|^{2} = 0$.
\end{enumerate}
\end{proposition}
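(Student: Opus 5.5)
The plan is to prove the cycle $1 \Rightarrow 2 \Rightarrow 1$ and $1 \Leftrightarrow 3$. Throughout, let $M$ be a bound with $|a_n| \le M$ for all $n$, and write $D_n := D \cap \{0,1,\dots,n-1\}$.

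\textbf{1 $\Rightarrow$ 2.} For $k \in \mathbb{N}$, set $E_k := \{ n : |a_n| \ge 1/k \}$. By Chebyshev, $\frac{1}{n}\#(E_k \cap \{0,\dots,n-1\}) \le \frac{k}{n}\sum_{j<n}|a_j| \to 0$, so each $E_k$ has density zero. Also $E_1 \subseteq E_2 \subseteq \cdots$. Choose increasing integers $\ell_1 < \ell_2 < \cdots$ such that $\frac{1}{n}\#(E_{k+1}\cap\{0,\dots,n-1\}) < \frac{1}{k+1}$ for all $n \ge \ell_k$. Then define
\[ D := \bigcup_{k} \bigl( E_{k+1} \cap [\ell_k, \ell_{k+1}) \bigr). \]
Two things must be checked.
\begin{enumerate}
\item Outside $D$ we have $a_n \to 0$. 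If $n \ge \ell_k$ and $n \notin D$, then $n$ lies in some block $[\ell_m,\ell_{m+1})$ with $m \ge k$. Since $n \notin E_{m+1} \supseteq E_{k+1}$, we get $|a_n| < 1/(k+1)$.
\item $D$ has density zero. For $\ell_k \le n < \ell_{k+1}$, we have $D_n \subseteq \{0,\dots,\ell_k-1\} \cup \bigl(E_{k+1}\cap\{0,\dots,n-1\}\bigr)$, using again that the $E_m$ are nested. Hence
\[ \frac{\#D_n}{n} \le \frac{\#(D \cap [0,\ell_k))}{n} + \frac{1}{k+1}. \]
\end{enumerate}
The first term on the right is a finite count divided by $n$, so it is not automatically small while $n$ is still close to $\ell_k$. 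This is the only delicate point. I will handle it by strengthening the choice of $\ell_k$ so that $\ell_k \ge k\,\ell_{k-1}$ and $\ell_k$ is large enough that $\#(E_k \cap [0,n))/n < 1/k$ for all $n \ge \ell_{k-1}$ as well. Then $D\cap[0,\ell_k)$ is controlled by $\ell_{k-1}$ plus the $E_k$-count, and both terms are $O(1/k)$. This is the standard diagonal argument of Walters, and I expect this bookkeeping to be the main obstacle.

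\textbf{2 $\Rightarrow$ 1.} Let $\epsilon>0$ and pick $N$ with $|a_n|<\epsilon$ for all $n \ge N$ with $n \notin D$. Splitting the sum over $D$ and its complement,
\[ \frac{1}{n}\sum_{j<n}|a_j| \le \frac{N M}{n} + \frac{M\,\#D_n}{n} + \epsilon. \]
The first two terms tend to $0$, so the $\limsup$ is at most $\epsilon$. Since $\epsilon$ was arbitrary, the limit is $0$.

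\textbf{1 $\Leftrightarrow$ 3.} Boundedness gives $|a_j|^2 \le M|a_j|$, which yields $1 \Rightarrow 3$. For the converse, Cauchy--Schwarz gives
\[ \frac{1}{n}\sum_{j<n}|a_j| \le \Bigl(\frac{1}{n}\sum_{j<n}|a_j|^2\Bigr)^{1/2}, \]
which yields $3 \Rightarrow 1$. Alternatively, one can route through 2 by applying it to the sequence $|a_n|^2$, which tends to $0$ off $D$ exactly when $a_n$ does.
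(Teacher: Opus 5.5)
Your proof is correct. The paper gives no proof of its own and instead cites Walters (Theorem 1.20); your argument is essentially that standard diagonal construction, and your direct treatment of $1 \Leftrightarrow 3$ via $|a_j|^2 \le M|a_j|$ and Cauchy--Schwarz also works.

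The one unnecessary step is the extra growth condition $\ell_k \ge k\,\ell_{k-1}$. By nestedness, every block $[\ell_m,\ell_{m+1})$ below $\ell_k$ uses some $E_{m+1} \subseteq E_k$, so $D \cap [0,\ell_k) \subseteq E_k \cap [0,n)$. Since $n \ge \ell_k > \ell_{k-1}$, your original choice of $\ell_{k-1}$ already makes the cardinality of this set less than $n/k$.
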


As a consequence of this proposition, we obtain the following lemma that we make use of while proving Theorem \ref{thm:main}. 
\begin{lemma}
\label{lem:equi}
Let $F$ be a holomorphic correspondence on $X$ with topological degree $d$, and let $\mu \in \mathcal{M} (X)$ be an $F^{*}$ invariant measure. Then the following are equivalent:
\begin{enumerate}
\item $F$ is weakly mixing with respect to $\mu$.
\item For every $\phi \in \mathcal{C} (X, \mathbb{R})$ and $\psi \in L^{1} (\mu)$, there exists a density zero set $D(\phi, \psi)$ such that
\[
\lim \limits_{D(\phi, \psi) \, \notni \, n \, \to \, \infty} I_{n}^{F} (\phi, \psi) = I(\phi) I(\psi).
\]
\item For every $\phi \in \mathcal{C} (X, \mathbb{R})$ and $\psi \in L^{1} (\mu)$, 
\[
\lim \limits_{n \, \to \, \infty} \dfrac{1}{n} \displaystyle \sum \limits_{j \, = \, 0}^{n-1} \left| I_{n}^{F} (\phi, \psi) - I(\phi) I(\psi) \right|^{2} = 0.
\]
\end{enumerate}
\end{lemma}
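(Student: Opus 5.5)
The plan is to fix $\phi \in \mathcal{C}(X,\mathbb{R})$ and $\psi \in L^{1}(\mu)$ and apply Proposition \ref{prop:wal20} to the real sequence
\[
a_n := I_{n}^{F}(\phi,\psi) - I(\phi)I(\psi).
\]
Proposition \ref{prop:wal20} then gives the three equivalences term by term. Its condition 1 is exactly the weak mixing condition of Definition \ref{defn:mixing} for this pair. Its condition 2 is statement 2 of the lemma, with $D(\phi,\psi) := D$. Its condition 3 is statement 3, where the summand is read as $|I_{j}^{F}(\phi,\psi) - I(\phi)I(\psi)|^{2}$; the index $n$ inside the sum in the statement is evidently a typo for $j$. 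Since the pair $(\phi,\psi)$ is arbitrary, quantifying over all such pairs gives the equivalence of 1, 2 and 3 in the lemma.

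The only hypothesis to verify is that $(a_n)$ is bounded, and this is where the work lies.

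\begin{itemize}
\item By definition, $U_F^n(\phi)(z) = d^{-n}\sideset{}{'}\sum_{w \in (F^n)^{\dagger}(z)} \phi(w)$. This is an average of $d^n$ values of $\phi$, counted with multiplicity, for $\mu$-a.e.\ $z$, namely at generic points. Hence $\|U_F^n(\phi)\|_{\infty} \le \|\phi\|_{\infty}$. Here we use that $X$ is compact, so $\phi$ is bounded, and that the generic fibre cardinality of $F^n$ is $d^n$.
\item Consequently $|I_{n}^{F}(\phi,\psi)| \le \|\phi\|_{\infty}\|\psi\|_{1}$.
\item Also $|I(\phi)I(\psi)| \le \|\phi\|_{\infty}\|\psi\|_{1}$.
\end{itemize}

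Together these give $|a_n| \le 2\|\phi\|_{\infty}\|\psi\|_1$ uniformly in $n$.

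The one subtle point is the $\mu$-a.e.\ sup bound on $U_F^n(\phi)$. The Koopman operator was only shown to be an $L^q$ contraction for $q<\infty$ (Proposition \ref{prop:contra}), and Hölder with $\psi \in L^1$ would need the $L^\infty$ bound. If the generic fibre count is delicate off a null set, an alternative is to note that the weighted sum $d^{-n}\sideset{}{'}\sum \,|\phi(w)|$ is at most $\|\phi\|_\infty$ times $d^{-n}\cdot\#'(F^n)^{\dagger}(z)$. Applying Equation \ref{eqn:inteq} to the constant function $1$ for $F^n$ shows that $d^{-n}\#'(F^n)^\dagger(z)$ integrates to $1$. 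Combined with the fact that the count is at most $d^n$ wherever the correspondence is proper of degree $d^n$, this yields the bound $\mu$-a.e. Once boundedness is secured, the rest is a direct citation of Proposition \ref{prop:wal20}.
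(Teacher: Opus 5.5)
Your proposal is correct and follows the paper's proof: fix $(\phi,\psi)$, set $a_n = I_n^F(\phi,\psi) - I(\phi)I(\psi)$, check that it is bounded, and apply Proposition \ref{prop:wal20} to it. Your explicit bound $|a_n| \le 2\|\phi\|_\infty\|\psi\|_1$ supplies the boundedness step that the paper only calls ``easy to see''; it uses the fact that $(F^n)^{\dagger}(z)$ contains $d^n$ points counted with multiplicity. You are also right that the $I_n^F$ inside the sum in statement 3 should read $I_j^F$.
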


\begin{proof}
Let $\phi \in \mathcal{C} (X, \mathbb{R})$ and $\psi \in L^{1} (\mu)$. Choose $a_n = I_{n}^{F} (\phi, \psi) - I(\phi) I(\psi).$ It is easy to see that $a_n$ is a bounded sequence. Now, applying Proposition \ref{prop:wal20} to the above choice of $a_n$ completes the proof of the lemma.
\end{proof}

This proposition conveys that the notion of weakly mixing behaves like the concept of mixing albeit in a density zero subset of set of non negative integers. However, the density zero set does depend on the functions $\phi$ and $\psi$. So, it is not quite there yet and hence still remains a weaker notion than that of mixing. 

We now state and prove a result that gives a convenient way to check if product of two holomorphic correspondences is weakly mixing. In fact, it ensures that it is enough to check the behaviour of the correlations on a certain ``generating" subfamilies of the concerned spaces rather than checking it on the whole space itself. 

To start with, we define the following subfamilies. In what follows, we will adopt the following notation. Consider two sets $C_1$ and $C_2$.  Let $f_1: C_1 \to \mathbb{R}$ and $f_2: C_2 \to \mathbb{R}$. We define $f_1 * f_2: C_1 \times C_2 \to \mathbb{R}$ as $(f_1 * f_2) (x_1, x_2) = f_1(x_1)f_2(x_2)$. 

Let $\mu \in \mathcal{M}(X)$. We define the following sets.
\begin{eqnarray*}
P_{1} & = & \left\{ \phi \in \mathcal{C} (X \times X, \mathbb{R}) : \phi = \phi_1 * \phi_2, \ \text{where} \ \phi_1, \phi_2 \in \mathcal{C} (X, \mathbb{R}) \right\} \\
P_{2} & = & \left \{ \psi \in L^{1} (\mu \times \mu) : \psi = \chi_{\raisebox{-.7ex}{$\scriptstyle B_1$}} * \chi_{\raisebox{-.7ex}{$\scriptstyle B_2$}} = \chi_{\raisebox{-.7ex}{$\scriptstyle B_1 \times B_2$}}, \ \text{where} \ B_1, B_2 \in \mathscr{B} (X) \right\} \\
S_{1} & = & \left \{ \phi \in \mathcal{C} (X \times X, \mathbb{R}) : \phi = \sum \limits_{j \, = \, 1}^{k_1} \phi_{j}, \ \text{for some} \ k_1 \in \mathbb{Z}^{+} \ \text{and} \ \phi_{j} \in P_1, \ \forall \ 1 \, \le \, j \, \le \, k_1 \right \} \\
S_{2} & = & \left \{ \psi \in L^{1} (\mu \times \mu) : \psi = \sum \limits_{j \, = \, 1}^{k_2} a_j \psi_{j}, \ \text{for some} \ k_2 \in \mathbb{Z}^{+} \ \text{and} \ \psi_{j} \in P_2, \ \forall \ 1 \, \le \, j \, \le \, k_2  \right \} 
\end{eqnarray*}

\begin{remark}
\label{rem:approxcross}
An application of the Stone Weierstrass theorem implies that the set $S_1$ is dense in $\mathcal{C}(X \times X, \mathbb{R)}$. Further, any function in $L^{1} (\mu \times \mu)$ can be approximated by a function from the set $S_2$. We will apply these two facts in proving the following theorem.
\end{remark}

\begin{lemma}
\label{thm:alg}
Let $F$ be a holomorphic correspondence on $X$ with topological degree $d$ and $\mu \in \mathcal{M} (X)$ be an $F^{*}$ invariant measure. Then, $F^{\times}$ is weakly mixing with respect to $\mu \times \mu \, \iff \,$ for any $\phi \in P_1$ and $\psi \in P_2$, we have 
\begin{equation}
\label{eqn:mixcross}
\lim \limits_{n \, \to \, \infty} \dfrac{1}{n} \displaystyle \sum \limits_{j \, = 0}^{n - 1} \left | I_{j}^{F^{\times}} (\phi, \psi) - I (\phi) I (\psi) \right| = 0.   
\end{equation}  
\end{lemma}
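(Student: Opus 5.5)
The forward implication is immediate. Every $\phi \in P_1$ lies in $\mathcal{C}(X \times X, \mathbb{R})$, and every $\psi \in P_2$ lies in $L^1(\mu \times \mu)$. So Equation \ref{eqn:mixcross} is a special case of Definition \ref{defn:mixing} applied to $F^{\times}$ with the $(F^{\times})^{*}$ invariant measure $\mu \times \mu$; invariance holds by the proposition on product measures in Section \ref{sec:five}. The real work is the converse, which I would carry out in two stages: first extend from $P_1 \times P_2$ to $S_1 \times S_2$ by linearity, then to all of $\mathcal{C}(X\times X,\mathbb{R}) \times L^1(\mu\times\mu)$ by approximation, using Remark \ref{rem:approxcross}.

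\textbf{Step 1 (linear extension).} For $\phi = \sum_{i=1}^{k_1}\phi_i \in S_1$ and $\psi = \sum_{l=1}^{k_2} a_l \psi_l \in S_2$, bilinearity of $I^{F^{\times}}_j(\cdot,\cdot)$ and of $I(\cdot)I(\cdot)$ gives
\[
I^{F^{\times}}_j(\phi,\psi) - I(\phi)I(\psi) = \sum_{i,l} a_l\bigl(I^{F^{\times}}_j(\phi_i,\psi_l) - I(\phi_i)I(\psi_l)\bigr).
\]
Applying the triangle inequality and averaging over $j$, each of the finitely many terms tends to $0$ by Equation \ref{eqn:mixcross}. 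So the weak mixing average vanishes on $S_1 \times S_2$.

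\textbf{Step 2 (approximation).} Fix $\phi \in \mathcal{C}(X\times X,\mathbb{R})$, $\psi \in L^1(\mu\times\mu)$ and $\epsilon>0$. By Stone--Weierstrass, choose $\eta \in S_1$ with $\|\phi-\eta\|_\infty<\epsilon$. Since $\phi - \eta$ is continuous and bounded, $U^j_{F^{\times}}$ maps it to a function bounded by $\|\phi-\eta\|_\infty$ pointwise, because it is an average of $d^{2j}$ values. Hence
\[
|I^{F^{\times}}_j(\phi-\eta,\psi)| \le \epsilon\|\psi\|_1 \quad\text{and}\quad |I(\phi-\eta)I(\psi)| \le \epsilon\|\psi\|_1 .
\]
Next choose $\theta \in S_2$ with $\|\psi-\theta\|_1 < \epsilon/(1+\|\eta\|_\infty)$. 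Then
\[
|I^{F^{\times}}_j(\eta,\psi-\theta)| \le \|\eta\|_\infty \|\psi-\theta\|_1 < \epsilon ,
\]
and similarly $|I(\eta)I(\psi-\theta)| < \epsilon$. The triangle inequality, as in Equation \ref{eqn:csineq}, splits $|I^{F^{\times}}_j(\phi,\psi)-I(\phi)I(\psi)|$ into these error terms plus $|I^{F^{\times}}_j(\eta,\theta)-I(\eta)I(\theta)|$. Averaging over $j$ and using Step 1, the $\limsup$ of the Cesàro average is at most $2\epsilon\|\psi\|_1+2\epsilon$. Letting $\epsilon \to 0$ gives weak mixing.

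\textbf{Main obstacle.} The delicate point is Step 2's use of $L^\infty$ control of $\phi-\eta$ and $L^1$ control of $\psi-\theta$. An $L^2$ argument via Proposition \ref{prop:contra} would not suffice when $\psi$ is only $L^1$, so I would rely on the pointwise bound $|U^j_{F^\times}f| \le \|f\|_\infty$ instead. I would also verify the density of $S_2$ in $L^1(\mu\times\mu)$, where $S_2$ consists of finite linear combinations of indicators of measurable rectangles. This holds because rectangles generate the product $\sigma$-algebra, and by a standard monotone class argument every product-measurable set can be approximated in measure by finite unions of rectangles. Such unions can be written as disjoint unions of rectangles, and simple functions are dense in $L^1$.
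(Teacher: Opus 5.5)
Your proposal is correct and follows essentially the same route as the paper. The paper also extends by linearity to $S_1 \times S_2$, approximates $\phi$ in sup-norm and $\psi$ in $L^1$, and closes with a triangle-inequality split. The only differences are cosmetic: you pass through $I_j^{F^{\times}}(\eta,\psi)$ and pick $\theta$ after $\eta$ with explicit constants, whereas the paper passes through $I_j^{F^{\times}}(\phi,\psi_1)$ and leaves the bounds as unnamed constants $M_1,\dots,M_4$.
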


\begin{proof}
The forward implication simply follows from the definition of weakly mixing. 

Before starting with the proof of the backward implication, observe that if any $\phi \in P_{1} \ \text{and} \ \psi \in P_2$ satisfies Equation \ref{eqn:mixcross}, then Equation \ref{eqn:mixcross} holds for all $\phi \in S_1$ and $\psi \in S_2$ as well. 

Now, to prove the backward implication, let us consider $\phi \in \mathcal{C} (X \times X, \mathbb{R})$ and $\psi \in L^{1} (\mu \times \mu)$. Let $\epsilon > 0$. By Remark \ref{rem:approxcross}, there exists $\phi_{1} \in S_1$ such that $\lVert \phi - \phi_1 \rVert_{\infty} < \epsilon$ and there exists $\psi_1 \in S_2$ such that, $\displaystyle \int \limits_{X \times X} |\psi - \psi_1| < \epsilon$. Note that, by choice of $\phi_1$ and $\psi_1$,
\begin{eqnarray}
\label{eqn:6.3}
\left | I (\phi_1) I (\psi_1) - I (\phi) I (\psi_1) \right| & = & \left | I (\psi_1) \right| \left | I (\phi_1) - I (\phi) \right| \nonumber \\ 
& \le & M_{1} \int \limits_{X \times X} \left |\phi_1 - \phi \right| \, \mathrm{d}(\mu \times \mu) \le M_1 \epsilon, \\
\label{eqn:6.4}
\left | I (\phi) I (\psi_1) - I (\phi) I (\psi) \right|  & = & \left | I (\phi) \right| \left | I (\psi_1) - I (\psi) \right| \nonumber \\
& \le & M_{2} \int \limits_{X \times X} \left |\psi_1 - \psi \right| \, \mathrm{d}(\mu \times \mu) \le M_2 \epsilon.
\end{eqnarray}

Further, we also have
\begin{eqnarray}
\label{eqn:6.5}
\left | I_{j}^{F^{\times}} (\phi, \psi) - I_{j}^{F^{\times}} (\phi_1, \psi_1) \right| & \le &  \left | I_{j}^{F^{\times}} (\phi, \psi) - I_{j}^{F^{\times}} (\phi, \psi_1) \right| + \left | I_{j}^{F^{\times}} (\phi, \psi_1) - I_{j}^{F^{\times}} (\phi_1, \psi_1) \right| \nonumber \\
& = & \left|I_{j}^{F^{\times}} (\phi, \psi - \psi_1) \right| + \left|I_{j}^{F^{\times}} (\phi - \phi_1, \psi_1) \right| \nonumber \\
& \le & M_{3} \epsilon + M_4 \epsilon
\end{eqnarray}
Using Inequalities \ref{eqn:6.3}, \ref{eqn:6.4} and \ref{eqn:6.5}, we obtain the following estimate.
\begin{eqnarray}
\label{eqn:6.6}
\left | I_{j}^{F^{\times}} (\phi, \psi) - I (\phi) I (\psi) \right| & \le &   M_3 \epsilon + M_4 \epsilon + \left | I_{j}^{F^{\times}} (\phi_1, \psi_1) - I (\phi_1) I (\psi_1) \right| + M_1 \epsilon + M_2 \epsilon  
\end{eqnarray}
The proof of the theorem now follows by applying the hypothesis (i.e), the convergence mentioned in Equation \ref{eqn:mixcross} to the Inequality \ref{eqn:6.6}.
\end{proof}

\begin{remark}
One can observe that the conclusion of Theorem \ref{thm:alg} holds for product of two distinct holomorphic correspondences as well with an analogous hypothesis.
\end{remark}

Equipped with all the tools required, we are now ready to state and prove the main theorem of this section.

\begin{theorem}
\label{thm:main}
Let $F$ be a holomorphic correspondence on $X$ with topological degree $d$ and $\mu$ be an $F^*$ invariant probability measure. Then the following are equivalent:
\begin{enumerate}
    \item $F$ is weakly mixing with respect to $\mu$.
    \item  $F^{\times}$ is ergodic with respect to $\mu \times \mu$.
    \item $F^{\times}$ is weakly mixing with respect to $\mu \times \mu$.
\end{enumerate}
\end{theorem}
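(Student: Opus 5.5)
We will prove the cycle $1 \Rightarrow 3 \Rightarrow 2 \Rightarrow 1$. The implication $3 \Rightarrow 2$ is immediate from Theorem \ref{thm:equiv} applied to $F^{\times}$ on $X \times X$. By the preceding Proposition, $\mu \times \mu$ is $(F^{\times})^{*}$ invariant and $F^{\times}$ has topological degree $d^{2}$. The common ingredient in the other two implications is a factorisation of the correlations on product functions. For $\phi = \phi_1 * \phi_2 \in P_1$ and $\psi = \chi_{B_1 \times B_2} \in P_2$ we have $(F^{\times})^{j} = F^{j} \times F^{j}$ and $((F^{j}\times F^{j}))^{\dagger}(z_1,z_2) = (F^{j})^{\dagger}(z_1) \times (F^{j})^{\dagger}(z_2)$ with multiplicities. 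Hence
\[
U_{F^{\times}}^{j}(\phi_1 * \phi_2) = U_F^{j}(\phi_1) * U_F^{j}(\phi_2),
\]
and by Fubini
\[
I_j^{F^{\times}}(\phi,\psi) = I_j^{F}(\phi_1,\chi_{B_1})\, I_j^{F}(\phi_2,\chi_{B_2}), \qquad I(\phi)I(\psi) = I(\phi_1)I(\chi_{B_1})\, I(\phi_2)I(\chi_{B_2}).
\]

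\textbf{1 $\Rightarrow$ 3.} By Lemma \ref{thm:alg} it suffices to verify Equation \ref{eqn:mixcross} for $\phi \in P_1$ and $\psi \in P_2$. Write $a_j = I_j^{F}(\phi_1,\chi_{B_1})$ with target $a = I(\phi_1)\mu(B_1)$, and $b_j$, $b$ analogously. These are bounded, with $|a_j| \le \|\phi_1\|_\infty$ by Proposition \ref{prop:contra}. We then use
\[
|a_j b_j - ab| \le |a_j|\,|b_j - b| + |b|\,|a_j - a|.
\]
Averaging over $j$ and using weak mixing of $F$ for each factor gives the Cesàro limit $0$.

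\textbf{2 $\Rightarrow$ 1.} Fix $\phi_0 \in \mathcal{C}(X,\mathbb{R})$ and a Borel set $B$. We aim to show $\frac{1}{n}\sum_j |a_j - a|^{2} \to 0$, where $a_j = I_j^F(\phi_0,\chi_B)$ and $a = I(\phi_0)\mu(B)$. Expand
\[
\frac1n\sum_j (a_j - a)^2 = \frac1n\sum_j a_j^2 - 2a\,\frac1n\sum_j a_j + a^2 .
\]
Ergodicity of $F^{\times}$ implies ergodicity of $F$: if $B'$ is almost invariant for $F$, then $B' \times X$ is almost invariant for $F^{\times}$, and $(\mu\times\mu)(B'\times X) = \mu(B')$. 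So by Theorem \ref{thm:ergequiv}, $\frac1n \sum_j a_j \to a$. Moreover $a_j^2 = I_j^{F^{\times}}(\phi_0*\phi_0, \chi_{B\times B})$ by the factorisation, and Theorem \ref{thm:ergequiv} applied to $F^{\times}$ gives $\frac1n\sum_j a_j^2 \to I(\phi_0)^2\mu(B)^2 = a^2$. Hence the expression tends to $a^2 - 2a^2 + a^2 = 0$. By Proposition \ref{prop:wal20} (iii $\Rightarrow$ i), we obtain weak mixing for the pair $(\phi_0,\chi_B)$.

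The main obstacle is extending this from $\psi = \chi_B$ to arbitrary $\psi \in L^1(\mu)$. Linearity in $\psi$ together with the triangle inequality handles simple functions. For general $\psi$ we approximate in $L^1$ and use $|I_j^F(\phi_0,\psi-\psi')| \le \|\phi_0\|_\infty \|\psi-\psi'\|_1$, which is the same $\epsilon$-approximation argument used in Lemma \ref{thm:alg}. We must also confirm that the definition of ergodicity for $F^{\times}$ correctly feeds Theorem \ref{thm:ergequiv}, item 3, with the continuous function $\phi_0*\phi_0$ and the $L^1$ function $\chi_{B\times B}$. This is fine since both lie in the required classes.
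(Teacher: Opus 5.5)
Your proposal is correct and follows the paper's proof: the same cycle $1\Rightarrow3\Rightarrow2\Rightarrow1$, the reduction via Lemma~\ref{thm:alg} to product functions, the Fubini factorisation of the correlations, and the second-moment argument with $\phi_0*\phi_0$, $\chi_{B\times B}$ combined with Proposition~\ref{prop:wal20}. The differences are only local. In $1\Rightarrow3$ you use the estimate $|a_jb_j-ab|\le|a_j||b_j-b|+|b||a_j-a|$ in place of the density-zero sets $D_1\cup D_2$. In $2\Rightarrow1$ you obtain ergodicity of $F$ from the almost invariant set $B'\times X$, whereas the paper plugs $f\circ\pi_1$, $g\circ\pi_1$ into Theorem~\ref{thm:ergequiv}. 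Your detour through indicators followed by $L^1$-approximation is unnecessary, since $g*g\in L^1(\mu\times\mu)$ for any $g\in L^1(\mu)$.
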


\begin{proof}
\textbf{1 $\implies$ 3} \ Assume that $F$ is weakly mixing with respect to $\mu$. By virtue of Theorem \ref{thm:alg}, it is enough to prove that for all $\phi \in P_{1}$ and $\psi \in P_{2}$,
\begin{equation}
\label{eqn:5}
 \lim \limits_{n \, \to \, \infty} \frac{1}{n} \sum \limits_{j=0}^{n-1}\left|I_{j}^{F^{\times}} (\phi, \psi) - I (\phi) I (\psi) \right|= 0  
\end{equation}
Let $\phi = \phi_1 * \phi_2$, where $\phi_1, \phi_2 \in \mathcal{C} (X, \mathbb{R})$ and let $\psi = \chi_{\raisebox{-.7ex}{$\scriptstyle B_1$}} * \chi_{\raisebox{-.7ex}{$\scriptstyle B_2$}}$, where $B_1, B_2 \in \mathscr{B} (X)$. Since $F$ is weakly mixing with respect to $\mu$, by Lemma \ref{lem:equi}, there exists density zero sets $D_1$ and $D_2$ such that
\begin{equation}
\label{eqn:3}
\lim \limits_{D_1 \, \notni \, n \, \to \, \infty} I_{n}^{F} (\phi_1, \chi_{\raisebox{-.7ex}{$\scriptstyle B_1$}}) = I(\phi_1) I(\chi_{\raisebox{-.7ex}{$\scriptstyle B_1$}})
\quad \quad \lim \limits_{D_2 \, \notni \, n \, \to \, \infty} I_{n}^{F} (\phi_2, \chi_{\raisebox{-.7ex}{$\scriptstyle B_2$}}) = I(\phi_2) I(\chi_{\raisebox{-.7ex}{$\scriptstyle B_2$}}).
\end{equation}
By Fubini's theorem we have, for all $n \in \mathbb{N}$,
\begin{equation}
\label{eqn:folland_1}
I_{n}^{F^{\times}} (\phi, \psi)  =  \bigintsss \limits_{X}  \left(\dfrac{1}{d^n} \displaystyle \sideset{}{'} \sum \limits_{w_1 \, \in \, \left(F^{n} \right)^{\dagger} (z_1)} \phi_1 (w_1) \right) \chi_{\raisebox{-.7ex}{$\scriptstyle B_1$}}(z_1) \, \mathrm{d}\mu \bigintsss \limits_{X}  \left(\dfrac{1}{d^n} \displaystyle \sideset{}{'} \sum \limits_{w_2 \, \in \, \left(F^{n} \right)^{\dagger} (z_2)} \phi_2 (w_2) \right) \chi_{\raisebox{-.7ex}{$\scriptstyle B_2$}}(z_2) \, \mathrm{d}\mu 
\end{equation}
Further, we also have,
\begin{equation}
\label{eqn:folland_2}
I (\phi)  =  \int \limits_{X} \phi_1(z_1) \, \mathrm{d}\mu \int \limits_{X} \phi_2(z_2) \, \mathrm{d}\mu \hspace{2mm} \text{and} \hspace{2mm} I (\psi) = \int \limits_{X} \chi_{B_1}(z_1) \, \mathrm{d}\mu \int \limits_{X} \chi_{B_2}(z_2) \, \mathrm{d}\mu
\end{equation}
Now, combining Equations \ref{eqn:3}, \ref{eqn:folland_1} and \ref{eqn:folland_2} gives
\begin{equation}
\label{eqn:densitydouble}
\lim \limits_{D_1\cup D_2 \, \notni \, n \, \to \, \infty} I_{n}^{F^{\times}} (\phi, \psi) = I (\phi) I (\psi). 
\end{equation}
Applying Proposition \ref{prop:wal20} to the sequence $a_n = I_{n}^{F^{\times}} (\phi, \psi) - I (\phi) I (\psi)$ and using Equation \ref{eqn:densitydouble} gives Equation \ref{eqn:5}, which proves that $F^{\times}$ is weakly mixing with respect to $\mu \times \mu$. 

\textbf{3 $\implies$ 2} \ This implication follows from Theorem \ref{thm:equiv}.

\textbf{2 $\implies$ 1} \ Assume that $F^{\times}$ is ergodic with respect to $\mu \times \mu$. Let $f \in \mathcal{C} (X, \mathbb{R})$ and $g \in L^{1} (\mu)$. Let $\pi_1: X \times X \to X$ be given by $\pi_{1} (z, w) = z$. Choosing $\phi=f\circ\pi_1$ and $\psi=g\circ\pi_1$ and applying Theorem \ref{thm:ergequiv}, we get $\lim \limits_{n \, \to \, \infty} \dfrac{1}{n} \displaystyle \sum \limits_{j \, =\, 0}^{n - 1} I_{j}^{F^{\times}} (f \circ \pi_1, g \circ \pi_1) = I (f \circ \pi_1) I (g \circ \pi_1).$ We observe that the above integral essentially boils down to an integral in single variable and as a result we get,
\begin{equation}
\label{eqn:firstpart}
\lim \limits_{n \, \to \, \infty} \frac{1}{n} \sum \limits_{j \,=\, 0}^{n - 1} I_{j}^{F} (f,g) = I(f)I(g)
\end{equation}

Similarly, by using Theorem \ref{thm:ergequiv} for the choice $\phi = f*f$ and $\psi = g*g$, we get
\begin{equation*}
    \lim \limits_{n \, \to \, \infty} \dfrac{1}{n} \displaystyle \sum \limits_{j \, =\, 0}^{n - 1} I_{j}^{F^{\times}} (f * f , g * g) = I (f * f) I (g * g),
\end{equation*}

which upon simplification results in      
\begin{equation}
\label{eqn:2}
\lim \limits_{n \, \to \, \infty}\frac{1}{n} \sum \limits_{j=0}^{n-1} \left (I_{j}^{F} (f,g) \right)^{2} = (I(f))^{2} (I(g))^{2}.
\end{equation}
By using Equations \ref{eqn:firstpart} and \ref{eqn:2}, we see that $\lim \limits_{n \, \to \, \infty} \dfrac{1}{n} \displaystyle \sum \limits_{j \, = \, 0}^{n-1} \left| I_{n}^{F} (f, g) - I(f) I(g) \right|^{2} = 0.$ At this stage, invoking Lemma \ref{lem:equi} shows that $F$ is weakly mixing with respect to $\mu$.
\end{proof}

\nocite{*}

\printbibliography[heading=bibintoc,title=References]
\thispagestyle{empty}

\end{document}